\theoremstyle{plain}
\newtheorem{thm}{Theorem}[section]
\newtheorem{cor}[thm]{Corollary}
\theoremstyle{definition}
\newtheorem*{notn*}{Notation}
\newtheorem{remark}[thm]{Remark}
\def\Z{\mathbb{Z}}
\def\C{\mathbb{C}}
\def\V{\mathbb{V}}
\def\fS{\mathfrak{S}}
\def\AA{\mathcal{A}}
\newcommand{\Sym}{{\mathrm{Sym}}}
\newcommand{\Sp}{{\mathrm{Sp}}}
\newcommand{\SL}{{\mathrm{SL}}}
\newcommand{\GL}{{\mathrm{GL}}}
\newcommand{\iso}{{\mathrm{iso}}}
\newenvironment{Tabular}[2][1]
  {\tabular{#2}}
  {\endtabular}
\title{Dimension formulas for spaces of vector-valued Siegel modular forms of degree two and level two}
\author{Jonas Bergstr\"om and Fabien Cl\'ery }
\date{}
\email{jonasb@math.su.se}
\email{cleryfabien@gmail.com}
\begin{document}

\begin{abstract}
Using a description of the cohomology of local systems on the moduli space of abelian surfaces with a full level two structure, together with a computation of Euler characteristics we find the isotypical decomposition, under the symmetric group on 6 letters, of spaces of vector-valued Siegel modular forms of degree two and level two.
\end{abstract}

\maketitle

\section{Introduction}
In this paper, we refine the previously known dimension formulas for spaces 
$M_{k,j}(\Gamma[2])$ of vector-valued Siegel modular forms of degree 2 and level 2, 
by determining their isotypical decomposition under the action of $\Sp(4,\Z/2\Z)\cong \mathfrak{S}_6$.
This extends previous work, see for instance 
\cite{Igusa64, Arakawa, Tsushima1982, Tsushima, Ibukiyama84, Ibukiyama07, Wakatsuki,  CvdGG}. 
In particular, Tsushima gave in \cite[Theorems 2, 3]{Tsushima} a formula for the dimension of the space 
$S_{k,j}(\Gamma[N])$ for any $N$ under the conditions $j\geqslant 1$ and $k\geqslant 5$ or $j=0$ and $k\geqslant 4$. The 
ranges for $j$ and $k$ in Tsushima's dimension formula for $N=2$ have been slightly extended in 
\cite[Theorem 12.1]{CvdGG}. There is an overview of dimension formulas such as these on the webpage \cite{Schmidt}. 

In \cite{BFvdG}, there is a conjectural description of the motivic Euler characteristic, with its isotypical decomposition under the action of $\mathfrak{S}_6$, of any symplectic local system on $\mathcal A_2[2]$ the moduli space of abelian surfaces with a full level two structure. These conjectures were later proven in \cite{Roesner}. In particular, this gives us the integer-valued Euler characteristic of an isotypical component under $\mathfrak{S}_6$ of any local system on $\mathcal A_2[2]$ as a sum of a well-known value (in terms of dimensions of spaces of elliptic modular cusp forms) plus four times the dimension of the (isotypical component of the) vector space of Siegel modular cusp forms of degree 2 and level 2, see Theorem~\ref{thm-main}. In Section~\ref{sec-euler} we then find an effective formula to compute these integer-valued Euler characteristics. This is achieved by stratifying the moduli space $\mathcal A_2[2]$ in terms of the automorphism groups of principally polarized abelian surfaces, which are Jacobians of smooth projective curves of genus $2$, or products of elliptic curves. By computing the action of these automorphism groups and of $\mathfrak{S}_6$, on the first cohomology group of the corresponding abelian surfaces, we can find a formula for the integer-valued Euler characteristic, see Equation~\eqref{eq-numeric}. This is a method previously used for instance in \cite{Getzler}. 

In Section~\ref{sec-smf} we give an overview of the Siegel modular forms we are interested in, together with a short description of the Arthur packets for $\mathrm{GSp}(4)$. Then, in Section~\ref{sec-sv} and Section~\ref{sec-vv} we include isotypical decompositions of the spaces of Siegel modular forms of degree $2$ and level $2$ to give a comprehensive reference for these results. 
Computer programs, written in Sage, which compute all results of this paper, are provided on a GitHub repository \cite{BC}. Tables with some of the results of this paper can also be found on the webpage \cite{BCFvdGweb}.

\section{Siegel modular forms} \label{sec-smf}
The level 2 congruence subgroups we are concerned with are  the following ones
\[
\Gamma[2]
=
\left \{
\gamma \in \Gamma : \gamma \equiv 1_4 \bmod 2
\right \},
\quad
\Gamma_1[2]
=
\left \{
\gamma \in \Gamma: 
\gamma 
\equiv 
\left(\begin{smallmatrix}1_2 & * \\ 0 & 1_2\end{smallmatrix}\right) 
\bmod 2
\right \}, 
\quad
\Gamma_0[2]
=
\left \{
\left(
\begin{smallmatrix}
a & b \\ c & d
\end{smallmatrix}
\right) \in \Gamma: c \equiv 0 \bmod 2
\right \},
\]
where $\Gamma=\Sp(4,\Z)=\{\gamma \in \GL(4,\Z): \gamma^t J \gamma =J\}$ with 
$J=\left(\begin{smallmatrix}0 & 1_2 \\ -1_2 & 0\end{smallmatrix}\right)$ and $1_n$ the identity matrix of size $n$. 
We clearly have the following inclusions 
$\Gamma[2] < \Gamma_1[2] < \Gamma_0[2] < \Gamma$ and the successive quotients can be identified as follows (see \cite[Sections 2, 3, 9]{CvdGG})
\begin{equation}\label{IdenQuotient}
\Gamma_1[2]/\Gamma[2] \cong (\Z/2\Z)^3, \quad
\Gamma_0[2]/\Gamma[2] \cong \fS_4 \times  \Z/2\Z, \quad
\Gamma_0[2]/\Gamma_1[2] \cong \fS_3, \quad
\Gamma/\Gamma[2] \cong \fS_6
\end{equation}
with $\mathfrak{S}_n$ the symmetric group on $n$ letters. As usual, theses groups act on the Siegel upper half space $\mathfrak{H}_2$ of degree 2:
$\mathfrak{H}_2=\{\tau \in \text{Mat}(2\times 2,\C) : \tau^t=\tau, \text{Im}(\tau)>0 \}$ via $\tau \mapsto \gamma\,\tau=(a\tau+b)(c\tau+d)^{-1}$.
More details on the orbifolds of the action of the previous groups can be
found in \cite[Section 2]{CvdGG}. 
For any integer $k$, any non-negative integer $j$ and any 
$\gamma=\left(
\begin{smallmatrix}
a & b \\ c & d
\end{smallmatrix}
\right)\in \Gamma$, 
we define a slash operator on functions 
$f\colon \mathfrak{H}_2 \to \text{Sym}^j(\C^2)$,
\[
\big(f\vert_{k,j} \gamma\big) (\tau)=
\det(c\tau +d)^{-k}
\otimes
\text{Sym}^j((c\tau +d)^{-1})\:f((a\tau+b)(c\tau +d)^{-1}).
\]
We let $G$ be one of the groups $\Gamma[2]$, $\Gamma_1[2]$, $\Gamma_0[2]$ or $\Gamma$.
The space of modular forms of weight $(k,j)$ on $G$ is denoted by $M_{k,j}(G)$ and is defined by
\[
M_{k,j}(G)=
\left\{
f \colon \mathfrak{H}_2 \to \text{Sym}^j(\C^2)\, | \,  f \, \text{holomorphic}, \,
f\vert_{k,j} \gamma=f
\text{ for any }\, 
\gamma\in G
\right\}.
\]
The subspace of cusp forms of $M_{k,j}(G)$ will be denoted by $S_{k,j}(G)$, this is the kernel of the (global) Siegel $\Phi$-operator.
Let us make a couple of easily verified remarks. Firstly, since $-1_4$ belongs to the group $\Gamma[2]$, 
we have that $M_{k,j}(\Gamma[2])=\{0\}$ if $j$ is odd. This can be directly read off from the functional equation satisfied by any 
element of $M_{k,j}(\Gamma[2])$. Therefore, from now on, we assume that 
{\textit{$j$ is even}}. 
Secondly, if $k$ is odd then $M_{k,j}(\Gamma[2])=S_{k,j}(\Gamma[2])$: 
let $\Gamma(2)$ be the principal congruence subgroup 
\footnote
{
We denote congruence subgroups of $\SL(2,\Z)$ by round brackets and those of $\Sp(4,\Z)$ by square brackets.
} 
of level $2$ of $\SL(2,\Z)$, the (global) Siegel $\Phi$-operator maps $M_{k,j}(\Gamma[2])$ to $M_{j+k}(\Gamma(2))^{\oplus 15}$ 
(note $15$ is the number of $1$-dimensional cusps of the group $\Gamma[2]$) and since $-1_2$ 
belongs to $\Gamma(2)$, the space $M_{j+k}(\Gamma(2))$ reduces to ${0}$ when $k$ is odd ($j$ is even). These 
two facts also hold for the groups $\Gamma_1[2]$, $\Gamma_0[2]$ and $\Gamma$.  
A less easy fact is the generalisation of the Koecher 
principle to vector-valued Siegel modular forms, see \cite[Satz 1]{Freitag79}, which implies that 
\[
M_{k,j}(\Gamma[2])=\left\{0\right\} \text{ for any } k<0 \text{ and any } j.
\]

The Petersson inner product provides an orthogonal decomposition 
\begin{equation}\label{OrthoDecom}
M_{k,j}(G)=E_{k,j}(G)\oplus S_{k,j}(G).
\end{equation}

We call $E_{k,j}(G)$ the space of Eisenstein series.
The space of Eisenstein series can in turn be decomposed into $2$ pieces, orthogonal with respect to the Petersson inner product, as follows (see \cite[Proposition 13.2]{CvdGG})
\[
E_{k,j}(G)=SE_{k,j}(G)\oplus KE_{k,j}(G)
\]
where $SE_{k,j}(G)$ denotes the space of Siegel-Eisenstein series (they map to constants under the Siegel $\Phi$-operator) and 
$KE_{k,j}(G)$ denotes the space of Klingen-Eisenstein series (they map to cusp forms of degree $1$ under the Siegel $\Phi$-operator).
The decomposition of the space of cusp forms $S_{k,j}(G)$ can also be refined according to the classification of automorphic representations of G$\Sp(4)$: Arthur packets. There are six different types of Arthur packets 
(see \cite{Arthur}) but only three of them can appear in our situation (see \cite[Section 2.1]{Schmidt18} and \cite[Proposition 4.3]{Roy_Schmidt_Yi}), namely:
\begin{itemize}
\item $\textbf{(G)}$: general type.  
\item $\textbf{(Y)}$: Yoshida type. They can only appear in $S_{k,j}(G)$ with $j>0$, i.e., in the case of vector-valued cusp forms. 
Modular forms of this type are also called \textbf{Yoshida lifts}.
\item $\textbf{(P)}$: Saito-Kurokawa type (Siegel parabolic). 
They can only appear in $S_{k,0}(G)$, i.e., in the case of scalar-valued cusp forms. 
Modular forms of this type are also called \textbf{Saito-Kurokawa lifts}.
\end{itemize}
The other three types of Arthur packets (Howe--Piatetski-Shapiro type (Borel parabolic), finite type and Soudry type (Klingen parabolic) do not appear in our situation.
So, in summary, we have 
\begin{equation}\label{DecompPackets}
E_{k,j}(G)=SE_{k,j}(G)\oplus KE_{k,j}(G) \quad
\text{and} \quad
S_{k,j}(G)=
S^{\textbf{(G)}}_{k,j}(G)\oplus S^{\textbf{(P)}}_{k,j}(G)\oplus 
S^{\textbf{(Y)}}_{k,j}(G).
\end{equation}
Note that the last decomposition is also orthogonal with respect to the Petersson inner product.
Since the group $\Gamma[2]$ is a normal subgroup of  $\Gamma$ (it is the kernel of the reduction modulo $2$), 
we get an action of $\Gamma$ on the space $M_{k,j}(\Gamma[2])$
\[
\begin{smallmatrix}
\Sp(4,\Z) \times M_{k,j}(\Gamma[2]) & \to & M_{k,j}(\Gamma[2])\\
(\gamma,f) & \mapsto & f\vert_{k,j} \gamma^{-1}
\end{smallmatrix}
\]
From this action, we deduce a group homomorphism
\[
\begin{smallmatrix}
\Gamma & \to & \GL(M_{k,j}(\Gamma[2]))\\
\gamma & \mapsto &
\left(
\begin{smallmatrix}
M_{k,j}(\Gamma[2]) & 
\to & M_{k,j}(\Gamma[2])\\
f & \mapsto & f\vert_{k,j} \gamma^{-1}
\end{smallmatrix}
\right)
\end{smallmatrix}
\]
whose kernel obviously contains the group $\Gamma[2]$. So the previous homomorphism factors through the group $\Gamma[2]$ and we obtain a group homomorphism 

\[
\Gamma/\Gamma[2]  \cong \Sp(4,\Z/2\Z) \cong \mathfrak{S}_6  \to  \GL(M_{k,j}(\Gamma[2]))
\]
i.e., a representation of the group $\mathfrak{S}_6$ on the space $M_{k,j}(\Gamma[2])$. 
Note that the second isomorphism is ambiguous due to the outer automorphism of $\mathfrak{S}_6$ so 
we need to fix this isomorphism. 
We fix this isomorphism as follows:
$\mathfrak{S}_6=\langle(12),(123456)\rangle$ and as in \cite[ Equation (3.2)]{CvdGG} (see also \cite[pp. 398-399]{Igusa64}), we set
\[
(12) \mapsto 
\left(
\begin{smallmatrix}
1 & 0 & 1 & 0 \\
0 & 1 & 0 & 0 \\
0 & 0 & 1 & 0 \\
0 & 0 & 0 & 1 
\end{smallmatrix}
\right) \bmod 2
\quad
\text{and}
\quad
(123456) \mapsto 
\left(
\begin{smallmatrix}
0 & 1 & 0 & 1 \\
1 & 0 & 1 & 0 \\
1 & 0 & 1 & 1 \\
-1 & 1 & 0 & 1 
\end{smallmatrix}
\right) \bmod 2.
\]

The irreducible representations of $\mathfrak{S}_n$ correspond bijectively with the partitions of $n$.
The representation of the symmetric group $\mathfrak{S}_n$ corresponding with the partition $\varpi$ will be denoted by $s[\varpi]$, 
with $s[n]$ the trivial one and $s[1^n]$ 
the alternating one. 
\begin{table*}[ht]\label{Irrep_S_6}
\caption{Irreducible representations of $\fS_6$ and their dimensions}
\begin{center}
\begin{tabular}{c|c|c|c|c|c|c|c|c|c|c|c}
    $[\varpi]$ & $[6]$ & $[5,1]$ & $[4,2]$ & $[4,1^2]$ & $[3^2]$ & $[3,2,1]$ & $[3,1^3]$ & $[2^3]$ & $[2^2,1^2]$ & $[2,1^4]$ & $[1^6]$ \\
    \hline
    $\dim s[\varpi]$ & 1 & 5 & 9 & 10 & 5 & 16 & 10 & 5 & 9 & 5 & 1 
\end{tabular}
\label{table:IrrepList}
\end{center}
\end{table*}

\begin{remark}\label{Character}
Let $M_{k,j}(\Gamma,\epsilon)$ be the space of modular forms of weight $(k,j)$ on $\Gamma$ with character $\epsilon$ where $\epsilon$ is the unique non-trivial character of $\Gamma$, see \cite[Section 12]{CFvdG19} for a brief description of this character. 
Then we have $M_{k,j}(\Gamma,\epsilon)=M_{k,j}(\Gamma[2])^{s[1^6]}$ where $M_{k,j}(\Gamma[2])^{s[1^6]}$ denotes the $\fS_6$-anti-invariant subspace of $M_{k,j}(\Gamma[2])$, i.e., 
$M_{k,j}(\Gamma[2])^{s[1^6]}=
\left\{
f \in M_{k,j}(\Gamma[2]) \, | \, f\vert_{k,j} \sigma =\text{sgn}(\sigma) f 
\text{ for any }\, 
\sigma \in \fS_6
\right\}$
where $\text{sgn}(\sigma)$ denotes the signature of $\sigma$. 
\end{remark}
\begin{remark}\label{S3_rep_space}
As the congruence subgroup $\Gamma_1[2]$ is a normal subgroup of $\Gamma_0[2]$, in the same way we turned the space $M_{k,j}(\Gamma[2])$ into a $\fS_6$-representation space, we can turn the space 
$M_{k,j}(\Gamma_1[2])$ into a $\fS_3$-representation space (see last but one isomorphism in (\ref{IdenQuotient}) and \cite[Eq.(3.3)]{CvdGG} for an explicit description of this quotient). 
\end{remark}

For a $\fS_n$-representation space $V$, we write its isotypical decomposition as 
\begin{equation}\label{iso_Sn}
\iso_{\fS_n} V=\sum_{\varpi \vdash n} m_{s[\varpi]}(V) \cdot s[\varpi] \in \Z[\fS_n],
\end{equation}
where $\Z[\fS_n]$ is the representation ring and $m_{s[\varpi]}(V)$ is the multiplicity of the representation $s[\varpi]$ appearing in $V$. We call the right hand side of (\ref{iso_Sn}) the isotypical decomposition of $V$. 

Knowing the isotypical decomposition of a space $M_{k,j}(\Gamma[2])$ gives us all the information we want about the spaces $M_{k,j}(\Gamma_1[2])$, $M_{k,j}(\Gamma_0[2])$ and 
$M_{k,j}(\Gamma)$ by representation theory.
The last isomorphism in (\ref{IdenQuotient}) tells us that the space $M_{k,j}(\Gamma)$ is  the invariant subspace of $M_{k,j}(\Gamma[2])$ under the action of the symmetric group $\fS_6$.
From the first isomorphism in (\ref{IdenQuotient}), we see that  
$M_{k,j}(\Gamma_1[2])$ is the invariant subspace of $M_{k,j}(\Gamma[2])$ under the action of the group $(\Z/2\Z)^3\cong \langle (12), (34), (56)\rangle$. 
Write 
\[
\iso_{\mathfrak{S_6}}M_{k,j}(\Gamma[2])=
m_{s[6]}\,s[6]+m_{s[5,1]}\,s[5,1]+\cdots+m_{s[1^6]}\,s[1^6] 
\]
then we have
\begin{align*}
&\iso_{\mathfrak{S_3}}M_{k,j}(\Gamma_1[2]) = 
(m_{s[6]}+m_{s[4,2]}+m_{s[2^3]})s[3]+
(m_{s[5,1]}+m_{s[4,2]}+m_{s[3,2,1]})s[2,1]+ 
(m_{s[4,1^2]}+m_{s[3^2]})s[1^3], \\ 
&\dim M_{k,j}(\Gamma_0[2])=
M_{k,j}(\Gamma_1[2])^{\fS_3}=m_{s[6]}+m_{s[4,2]}+m_{s[2^3]},\\
&\dim M_{k,j}(\Gamma)=\dim M_{k,j}(\Gamma[2])^{\fS_6}=m_{s[6]},\\
&\dim M_{k,j}(\Gamma,\varepsilon)=\dim M_{k,j}(\Gamma[2])^{s[1^6]}=m_{s[1^6]}
\end{align*}
where the last formula comes from Remark \ref{Character} and the first two come from \cite[Sections 3 and 9]{CvdGG}.
Therefore we focus on the spaces  $M_{k,j}(\Gamma[2])$ in the sequel. 
In the case of scalar-valued modular forms, the previous decompositions allow us to recover the results given in \cite[Appendix A]{Roy_Schmidt_Yi} for the groups $\Gamma, \Gamma_0[2]$ and $\Gamma[2]$. 
In the case of scalar-valued modular forms we simply write
\[
M_{k}(\Gamma[2])=M_{k,0}(\Gamma[2]) \quad \text{and} \quad S_{k}(\Gamma[2])=S_{k,0}(\Gamma[2]).
\]

\section{Isotypical decompositions in the scalar-valued case} \label{sec-sv}
By the Koecher principle, we know that 
$M_{k}(\Gamma[2])=\{0\}$ for $k<0$.
The following theorem is due to Igusa, see \cite[p.398]{Igusa64}.
\begin{thm}[Igusa]\label{Igusa}
We have
\[
\dim M_{k}(\Gamma[2])=
\begin{cases}
\frac{(k+1)(k^2+2k+12)}{12} \hspace{20pt}   \text{if}\,\,\, k\geqslant 0 \,\,\, even\\
\dim M_{k-5}(\Gamma[2]) \hspace{13pt} \text{if} \,\,\,\, k\geqslant 1\,\,\, \text{odd}.
\end{cases}
\]
\end{thm}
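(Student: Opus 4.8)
The plan is to identify the graded ring $\bigoplus_k M_k(\Gamma[2])$ explicitly and read off the Hilbert series. By Igusa's work on the ring of modular forms of level two (see \cite{Igusa64}), the ring of scalar-valued Siegel modular forms of degree $2$ and level $\Gamma[2]$ is generated, in weight one, by the ten even theta constants, and the relations among them are completely understood: the ring of even-weight forms is a polynomial ring on five of the degree-two power sums of theta-fourth-powers (equivalently, Igusa identifies $\bigoplus_k M_{2k}(\Gamma[2])$ with a specific finitely generated ring whose Hilbert series is a rational function with denominator a power of $(1-t)$ and $(1-t^2)$). Concretely, I would first quote from \cite[p.~398]{Igusa64} the presentation of the ring $A_{\mathrm{even}}=\bigoplus_{k\ \mathrm{even}} M_k(\Gamma[2])$, whose Hilbert series $\sum_{k\ \mathrm{even}} \dim M_k(\Gamma[2])\, t^k$ is a rational function; partial-fraction-expanding it and extracting the coefficient of $t^k$ will give a quasi-polynomial in $k$ which, one checks by a short computation, collapses to the single polynomial $\tfrac{(k+1)(k^2+2k+12)}{12}$ for all even $k\geq 0$. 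The fact that there is no dependence on $k \bmod (\text{something})$ beyond parity is exactly what makes the clean closed form possible, and verifying this collapse is the one genuinely computational point; but it is elementary, since one only needs to match a handful of leading terms and the constant term against the claimed cubic.

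For the odd-weight case the key input is again structural: multiplication by the weight-five cusp form $\chi_5$ (the product of the ten even theta constants, which is the unique, up to scalar, form realizing the alternating character and is nowhere-vanishing as a section up to the boundary in the relevant sense) gives an injection $M_{k-5}(\Gamma[2]) \hookrightarrow M_k(\Gamma[2])$ for $k$ odd, and Igusa's structure theorem says this map is in fact an isomorphism, i.e. every odd-weight form is divisible by $\chi_5$ with even-weight quotient. I would cite this divisibility statement from \cite{Igusa64} (it is the assertion that the full ring is $A_{\mathrm{even}}[\chi_5]/(\chi_5^2 - P)$ for an explicit $P \in A_{\mathrm{even}}$ of weight $10$), which immediately yields $\dim M_k(\Gamma[2]) = \dim M_{k-5}(\Gamma[2])$ for odd $k\geq 1$ and also re-derives, via $\chi_5^2 = P$, the consistency of the even-weight Hilbert series.

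The main obstacle is not conceptual but bibliographic and arithmetic: one must extract from \cite{Igusa64} the precise generators, degrees, and the single relation, since Igusa's notation is classical and the ring is presented in terms of theta constants rather than as an abstract graded ring, and then carry out the partial fraction decomposition carefully enough to confirm that the even-weight coefficient really is the stated cubic with \emph{no} residual periodic correction term. I would organize the write-up as: (1) recall the generators and the relation from \cite{Igusa64}; (2) write down the resulting rational Hilbert series for $A_{\mathrm{even}}$; (3) extract the coefficient and check it equals $\tfrac{(k+1)(k^2+2k+12)}{12}$ for even $k\geq 0$; (4) invoke the $\chi_5$-divisibility to get the odd case. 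Since the statement is explicitly attributed to Igusa, it would also be legitimate to keep the proof very short, simply citing \cite[p.~398]{Igusa64} for the even case and the structure of the ring for the odd case, and only spelling out step (3) as the one verification the reader might want to see.
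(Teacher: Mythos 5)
Your proposal follows the same route as the paper, which simply cites Igusa \cite[p.~398]{Igusa64} for the even-weight dimension and derives the odd case from $M_k(\Gamma[2])=S_k(\Gamma[2])=\chi_5\cdot M_{k-5}(\Gamma[2])$. Before writing it up, correct two details: the even-weight ring is generated in weight \emph{two} by the fourth powers of the ten even theta constants (these span a five-dimensional space), and it is \emph{not} a polynomial ring --- there is the single Igusa-quartic relation in weight eight (which your own outline later acknowledges), without which the Hilbert series $\frac{1-t^8}{(1-t^2)^5}$, and hence the cubic $\frac{(k+1)(k^2+2k+12)}{12}$, would come out wrong from weight $8$ onward.
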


For $k$ odd, the last equality comes from 
$M_{k}(\Gamma[2])=S_{k}(\Gamma[2])=\chi_5 \cdot M_{k-5}(\Gamma[2])$ 
where $\chi_5$ denotes the unique cusp form, up to a multiplicative constant, generating the space $S_{5}(\Gamma[2])$. 
In fact, Igusa did more than computing the dimension of the spaces $M_{k}(\Gamma[2])$.
He also computed the characters of $\mathfrak{S}_6$ on the space $M_{k}(\Gamma[2])$
(see  \cite[Theorem 2]{Igusa64}) and he showed that 
as $\mathfrak{S}_6$-representation for $k$ even  we have
\[
\iso_{\fS_6} M_{k}(\Gamma[2])={\text{Sym}}^{k/2}(s[2^3])-
\begin{cases}
0 \hspace{2.75cm}\text{ if } k \in \{0,2,4,6\}\\
{\text{Sym}}^{k/2-4}(s[2^3])  \,\,\,\,\,\,\,\,\,\,\,\, \text{ if } k \geqslant 8, 
\end{cases}
\]
where we put 
$\Sym^0(s[\varpi])=s[n]$ for any irreducible representation $s[\varpi]$ of $\mathfrak{S}_n$.
Note that the relation appearing in weight $8$ defines the Igusa quartic. 
\begin{thm}[Igusa]\label{Igusa2}
The following table gives the generating series for the multiplicity of 
the irreducible representations of $\,\fS_6$ in $M_{k}(\Gamma[2])$

\begin{center}
\begin{Tabular}[1.3]{c|c}
$s[\varpi]$ & $\sum_{k \geqslant 0}m_{{s[\varpi]}}(M_k(\Gamma[2]))\,t^k$ \\
\hline
$s[6]$ & $\frac{1+t^{35}}{(1-t^4)(1-t^6)(1-t^{10})(1-t^{12})}$   \\
\hline
$s[5,1]$ & $\frac{t^{11}(1+t)}{((1-t^4)(1-t^6))^2}$   \\
\hline
$s[4,2]$ & $\frac{t^4(1+t^{15})}{(1-t^2)(1-t^4)^2(1-t^{10})}$   \\
\hline
$s[4,1^2]$ & $\frac{t^{11}(1+t^4)}{(1-t)(1-t^4)(1-t^6)(1-t^{12})}$   \\
\hline
$s[3^2]$  & $\frac{t^7(1+t^{13})}{(1-t^2)(1-t^4)(1-t^6)(1-t^{12})}$   \\
\hline
$s[3,2,1]$ & $\frac{t^8(1-t^8)}{(1-t^2)^2(1-t^5)(1-t^6)^2}$   \\
\hline
$s[3,1^3]$ & $\frac{t^6(1+t^4+t^{11}+t^{15})}{(1-t^2)(1-t^4)(1-t^6)(1-t^{12})}$  \\
\hline
$s[2^3]$ & $\frac{t^2(1+t^{23})}{(1-t^2)(1-t^4)(1-t^6)(1-t^{12})}$   \\
\hline
$s[2^2,1^2]$ & $\frac{t^9}{(1-t^2)(1-t^4)^2(1-t^5)}$  \\
\hline
$s[2,1^4]$ & $\frac{t^6(1+t^{11})}{((1-t^4)(1-t^6))^2}$ \\
\hline
$s[1^6]$ & $\frac{t^5(1+t^{25})}{(1-t^4)(1-t^6)(1-t^{10})(1-t^{12})}$  \\
\end{Tabular}
\end{center}
\end{thm}
\begin{proof}
Let $X_k$ be the character of the representation of $\fS_6$ on the space 
$M_{k}(\Gamma[2])$ and let us denote by $\rho$ the irreducible representation $s[2^3]$. Then, for any $\sigma\in \fS_6$, Theorem 2 of \cite{Igusa64} gives
\[
\sum_{k\geqslant 0} X_k(\sigma)t^k=(1+\text{sgn}(\sigma)t^5)(1-t^8)/\det(1_5-\rho(\sigma)t^2).
\]
Clearly this formula depends only on the conjugacy class of $\sigma$ and Igusa, see \cite[p. 401]{Igusa64},
gave the expression of $f_{\sigma}(t)=\det(1_5-\rho(\sigma)t)$ for each irreducible representation of $\fS_6$. From the formula (see  for example \cite[p. 401]{Igusa64}) giving the multiplicity of an irreducible representation in a representation space of a finite group, we deduce that the multiplicity of the irreducible representation $s[\varpi]$ in $M_{k}(\Gamma[2])$ is given by 
\[
m_{{s[\varpi]}}(M_k(\Gamma[2])=
\frac1{|\fS_6|}\, \sum_{\sigma\in \fS_6}X_k(\sigma)\,X_{s[\varpi]}(\sigma^{-1})=
\frac1{720}\, \sum_{\sigma\in \fS_6}X_k(\sigma)\,X_{s[\varpi]}(\sigma)
\]
where the last equality comes from the fact that the conjugacy classes 
of the symmetric are stable by inversion and $X_{s[\varpi]}$ denotes the character of $s[\varpi]$. For a partition $p$ of $6$, we put $C_p$ for the conjugacy class of an element of $\fS_6$ whose cycle shape corresponds with $p$ then, the last formula reads as 
\[
m_{{s[\varpi]}}(M_k(\Gamma[2])=
\frac1{720}\, \sum_{p}|C_{p}|\,X_k(s[p])\,X_{s[\varpi]}(s[p])
\]
where the sum is over the partition of $6$ and $g(s[p])$ stands for the evaluation of  a class function $g$ on a single element of the conjugacy class $C_p$.
Therefore we get
\begin{align*}
\sum_{k\geqslant 0}m_{{s[\varpi]}}(M_k(\Gamma[2])\, t^k =&
\frac1{720}\, \sum_{p}|C_{p}|\,X_{s[\varpi]}(s[p])
\Big(\sum_{k\geqslant 0} X_k(s[p])t^k\Big)\\
&=
\frac1{720}\, \sum_{p}|C_{p}|\,X_{s[\varpi]}(s[p])\,
\frac{(1+\text{sgn}(s[p])t^5)(1-t^8)}{f_{s[p]}(t^2)}.
\end{align*}
By using the character table of the group $\fS_6$ as given for example in \cite[p. 400]{Igusa64}, we deduce the theorem.
\end{proof}

\begin{remark}
A couple of sanity checks. Firstly,
\[
\sum_{k \geqslant 0}
\sum_{s[\varpi]}\dim (s[\varpi])
m_{{s[\varpi]}}(M_k(\Gamma[2]))\,t^k=\sum_{k \geqslant 0}\dim M_{k}(\Gamma[2])\,t^k=\frac{(1+t^2)(1+t^4)(1+t^5)}{(1-t^2)^4},
\]
which is in agreement with Theorem \ref{Igusa}. Secondly, the generating series 
for the dimension of spaces of modular forms on $\Gamma_{0}[2]$ is given by
(see \cite[Appendix A.1]{Roy_Schmidt_Yi} and the references therein)
\[
\sum_{k \geqslant 0}\dim M_{k}(\Gamma_0[2])\,t^k
=\frac{1+t^{19}}{(1-t^2)(1-t^4)^2(1-t^6)}.  
\]
We checked that by adding the generating series for the multiplicities of the irreducible representations $s[6], s[4,2]$ and $s[2^3]$ we recover this formula.
\end{remark}
Let us give the first few isotypical decompositions of $M_k(\Gamma[2])$, 
we put $d=\dim M_k(\Gamma[2])$

\begin{footnotesize}
\begin{center}
\[
\begin{array}{c|c|c|c|c|c|c|c|c|c|c|c|c}
s[\varpi] & s[6] & s[5,1] &  s[4,2] & s[4,1^2] & s[3^2] & s[3,2,1] & s[3,1^3] & s[2^3] & s[2^2,1^2] & s[2,1^4] & s[1^6] & \\
\dim s[\varpi]& 1& 5 & 9 & 10 & 5 & 16 & 10 & 5 & 9 & 5 & 1&\\
\hline
\hline
k & & & & & & & & & & & & d \\
\hline
0 & 1 & 0 & 0 & 0 & 0 & 0 & 0 & 0 & 0 & 0 & 0 & 1\\
1 & 0 & 0 & 0 & 0 & 0 & 0 & 0 & 0 & 0 & 0 & 0 & 0\\
2 & 0 & 0 & 0 & 0 & 0 & 0 & 0 & 1 & 0 & 0 & 0 & 5\\
3 & 0 & 0 & 0 & 0 & 0 & 0 & 0 & 0 & 0 & 0 & 0 & 0\\
4 &  1 & 0 & 1 & 0 & 0 & 0 & 0 & 1 & 0 & 0 & 0 & 15\\
5 & 0 & 0 & 0 & 0 & 0 & 0 & 0 & 0 & 0 & 0 & 1 & 1\\
6 &  1 & 0 & 1 & 0 & 0 & 0 & 1 & 2 & 0 & 1 & 0 & 35\\
7 &  0 & 0 & 0 & 0 & 1 & 0 & 0 & 0 & 0 & 0 & 0 & 5\\
8 &  1 & 0 & 3 & 0 & 0 & 1 & 1 & 3 & 0 & 0 & 0 & 69\\
9 &  0 & 0 & 0 & 0 & 1 & 0 & 0 & 0 & 1 & 0 & 1 & 15\\
10 &  2 & 0 & 3 & 0 & 0 & 2 & 3 & 4 & 0 & 2 & 0 & 121\\
11 & 0 & 1 & 0 & 1 & 2 & 0 & 0 & 0 & 1 & 0 & 1 & 35
\end{array}
\]
\end{center}
\end{footnotesize}

Next, we give the dimension of the various pieces of the space 
$M_{k}(\Gamma[2])$ as in (\ref{OrthoDecom}) and (\ref{DecompPackets})
and also their isotypical decomposition. We distinguish two cases according to the parity of $k$. 

\subsection{Isotypical decomposition of 
\texorpdfstring{$M_{2k+1}(\Gamma[2])$}{M {2k+1}(Gamma[2])}} 

We already have seen that $M_{2k+1}(\Gamma[2])=S_{2k+1}(\Gamma[2])$ and from Theorem \ref{Igusa} we deduce 
$\dim S_{1}(\Gamma[2])=\dim S_{3}(\Gamma[2])=0$ and
\[
\dim S_{2k+1}(\Gamma[2])=
(2k^3-9k^2+19k-15)/3 \quad \text{for} \quad k\geqslant 2.
\]
Therefore the generating series for $\dim S_{2k+1}(\Gamma[2])$ is given by
\[
\sum_{k\geqslant 0}
\dim S_{2k+1}(\Gamma[2])\, t^{2k+1}=
\frac{t^5(1+t^2+t^4+t^{6})}{(1-t^2)^4}.
\]
We have seen that for $k\geqslant 0$ we have
$
M_{2k+1}(\Gamma[2])=S_{2k+1}(\Gamma[2])=\chi_5\cdot M_{2k-4}(\Gamma[2])$
and since the cusp form $\chi_5$ is $\fS_6$-anti-invariant (i.e. it occurs in the alternating representation $s[1^6]$) we get
\[
\iso_{\fS_6}M_{2k+1}(\Gamma[2])=\iso_{\fS_6}S_{2k+1}(\Gamma[2])=
s[1^6] \otimes \iso_{\fS_6} M_{2k-4}(\Gamma[2]).
\]
As a corollary of Theorem \ref{Igusa2}, we deduce the following.  
\begin{cor}
The following table gives the generating series for the multiplicity of 
the irreducible representations of $\,\fS_6$ in $S_{2k+1}(\Gamma[2])$
\end{cor}
\begin{center}
\begin{Tabular}[1.35]{c|c}
$s[\varpi]$ & $\sum_{k \geqslant 0}m_{{s[\varpi]}}(S_{2k+1}(\Gamma[2]))\,t^{2k+1}$ \\
\hline
$s[6]$ & $\frac{t^{35}}{(1-t^4)(1-t^6)(1-t^{10})(1-t^{12})}$   \\
\hline
$s[5,1]$ & $\frac{t^{11}}{((1-t^4)(1-t^6))^2}$   \\
\hline
$s[4,2]$ & $\frac{t^{19}}{(1-t^2)(1-t^4)^2(1-t^{10})}$   \\
\hline
$s[4,1^2]$ & $\frac{t^{11}(1+t^4)}{(1-t^2)(1-t^4)(1-t^6)(1-t^{12})}$   \\
\hline
$s[3^2]$  & $\frac{t^7}{(1-t^2)(1-t^4)(1-t^6)(1-t^{12})}$   \\
\hline
$s[3,2,1]$ & $\frac{t^{13}(1+t^2+t^4+t^6)}{(1-t^2)(1-t^6)^2(1-t^{10})}$   \\
\hline
$s[3,1^3]$ & $\frac{t^{17}(1+t^4)}{(1-t^2)(1-t^4)(1-t^6)(1-t^{12})}$  \\
\hline
$s[2^3]$ & $\frac{t^{25}}{(1-t^2)(1-t^4)(1-t^6)(1-t^{12})}$   \\
\hline
$s[2^2,1^2]$ & $\frac{t^9}{(1-t^2)(1-t^4)^2(1-t^{10})}$  \\
\hline
$s[2,1^4]$ & $\frac{t^{17}}{((1-t^4)(1-t^6))^2}$ \\
\hline
$s[1^6]$ & $\frac{t^5}{(1-t^4)(1-t^6)(1-t^{10})(1-t^{12})}$  \\
\end{Tabular}
\end{center}
\begin{proof}
Let $G_{s[\varpi]}$ be the generating series of the multiplicity of the irreducible representation $s[\varpi]$ in $M_{k}(\Gamma[2])$. Then the generating series of the multiplicity of the irreducible representation $s[\varpi]$ in $M_{2k+1}(\Gamma[2])=S_{2k+1}(\Gamma[2])$ is given by 
$(G_{s[\varpi]}(t)-G_{s[\varpi]}(-t))/2$.
\end{proof}
\smallskip
Let us give the first few isotypical decompositions of 
$S_{2k+1}(\Gamma[2])$, we put $d=\dim S_{2k+1}(\Gamma[2])$.

\begin{footnotesize}
\begin{center}
\[
\begin{array}{c|c|c|c|c|c|c|c|c|c|c|c|c}
s[\varpi] & s[6] & s[5,1] &  s[4,2] & s[4,1^2] & s[3^2] & s[3,2,1] & s[3,1^3] & s[2^3] & s[2^2,1^2] & s[2,1^4] & s[1^6] & \\
\dim s[\varpi]& 1& 5 & 9 & 10 & 5 & 16 & 10 & 5 & 9 & 5 & 1&\\
\hline
\hline
2k+1 & & & & & & & & & & & & d \\
\hline
1 & 0 & 0 & 0 & 0 & 0 & 0 & 0 & 0 & 0 & 0 & 0 & 0\\
3 & 0 & 0 & 0 & 0 & 0 & 0 & 0 & 0 & 0 & 0 & 0 & 0\\
5 & 0 & 0 & 0 & 0 & 0 & 0 & 0 & 0 & 0 & 0 & 1 & 1\\
7 &  0 & 0 & 0 & 0 & 1 & 0 & 0 & 0 & 0 & 0 & 0 & 5\\
9 & 0 & 0 & 0 & 0 & 1 & 0 & 0 & 0 & 1 & 0 & 1 & 15\\
11 &  0 &  1 &  0 &  1 &  2 &  0 &  0 &  0 &  1 &  0 &  1 & 35\\
13 & 0 &  0 &  0 &  1 &  3 &  1 &  0 &  0 &  3 &  0 &  1 & 69\\
15 &  0 &  2 &  0 &  3 &  4 &  2 &  0 &  0 &  3 &  0 &  2 & 121
\end{array}
\]
\end{center}
\end{footnotesize}

We further refine these formulas according to the decomposition (\ref{DecompPackets}).
Conjecture 6.6 of \cite{BFvdG}, proved by R\"osner, see \cite[Section 5]{Roesner}, tells us that only the Arthur packets $\textbf{(P)}$ and $\textbf{(G)}$ can occur in $S_{2k+1}(\Gamma[2])$. Hence, 
\[
S_{2k+1}(\Gamma[2])=
S^{\textbf{(G)}}_{2k+1}(\Gamma[2])\oplus S^{\textbf{(P)}}_{2k+1}(\Gamma[2]).
\]
Moreover this result gives the isotypical decomposition of $S^{\textbf{(P)}}_{2k+1}(\Gamma[2])$:
\[
\iso_{\fS_6}
S^{\textbf{(P)}}_{2k+1}(\Gamma[2])=
d^{-}_{2,4k} s[5,1] +
d_{4,4k} s[3^2]  +
d^{+}_{2,4k}  s[1^6]
\]
where 
\begin{equation}\label{def_d_k}
d_{N,k}= \dim S_{k}(\Gamma_0(N))^{\text{new}} \quad 
\text{and} \quad
d^{\pm}_{N,k}=\dim S^{\pm}_{k}(\Gamma_0(N))^{\text{new}}.
\end{equation}
Here $\Gamma_0(N)$ denotes the Hecke congruence subgroup of level $N$ of $\SL(2,\Z)$ and 
$S^{\pm}_{k}(\Gamma_0(N))^{\text{new}}$ 
denotes the space of new cusp forms of weight $k$ on 
$\Gamma_0(N)$ with eigenvalues $\pm 1$ for the Fricke involution. 
Note that for $N=1$, we have $d_{1,k}= \dim S_{k}(\Gamma_0(1))^{\text{new}}=\dim S_k(\SL(2,\Z))$.
The dimension of the space $S_{k}(\Gamma_0(N))^{\text{new}}$ is classical, see \cite[Chapter 6]{Stein} and for example we have:
\begin{equation}\label{dimnew2new4}
\dim S_{4k}(\Gamma_0(2))^{\text{new}}=k-1-2\,\lfloor k/3 \rfloor
\quad
\text{and}
\quad
\dim S_{4k}(\Gamma_0(4))^{\text{new}}=\lfloor k/3 \rfloor
\quad
\text{for}
\quad 
k\geqslant 1.
\end{equation}
For $k>2$, the dimensions of $S^{\pm}_{k}(\Gamma_0(2))^{\text{new}}$ are given by (see \cite[Theorem 2.2]{MartinK})
\[
d^{\pm}_{2,k}=
\left\{
\begin{tabular}{ccc}
$(d_{2,k}\pm 1)/2$  & \text{if} & $k\equiv 0 \mod 8$\\
$(d_{2,k}\mp 1)/2$  & \text{if} & $k\equiv 2 \mod 8$\\
$d_{2,k}/2$ &  \text{if} & $k\equiv 4, 6 \mod 8$.
\end{tabular}
\right.
\]
So the generating series for the multiplicities of the irreducible representations $s[5,1]$, $s[3^2]$ and $s[1^6]$ in 
$S^{\textbf{(P)}}_{2k+1}(\Gamma[2])$ for $k\geqslant 0$ are given by

\begin{center}
\begin{Tabular}[1.3]{c|c|c|c}
$s[\varpi]$ & $s[5,1]$ & $s[3^2]$  & $s[1^6]$ \\
\hline
$\sum_{k \geqslant 0}m_{{s[\varpi]}}(S^{\textbf{(P)}}_{2k+1}(\Gamma[2]))\,t^{2k+1}$ & $\frac{t^{11}}{(1-t^4)(1-t^6)}$ & $\frac{t^7}{(1-t^2)(1-t^6)}$ & $\frac{t^5}{(1-t^4)(1-t^6)}$
\end{Tabular}
\end{center}

\smallskip
\noindent Keeping in mind that $\dim s[5,1]=\dim s[3^2]=5$ and $\dim s[1^6]=1$, we get
\[
\sum_{k\geqslant 0}
\dim S^{\textbf{(P)}}_{2k+1}(\Gamma[2]) t^{2k+1}=
\frac{t^5+5t^7+5t^9+5t^{11}}{(1-t^4)(1-t^6)}.
\]
From this we deduce the generating series for $\dim S^{\textbf{(G)}}_{2k+1}(\Gamma[2])$
\begin{align*}
\sum_{k\geqslant 0}
\dim S^{\textbf{(G)}}_{2k+1}(\Gamma[2])\,t^{2k+1}=&
\sum_{k\geqslant 0}
\bigl(\dim S_{2k+1}(\Gamma[2])-\dim S^{\textbf{(P)}}_{2k+1}(\Gamma[2])\bigr)\,t^{2k+1}
=
\frac{t^9(t^8-2t^6+10t^4+6t^2+9)}{(1-t^2)^3(1-t^6)(1+t^2)}.
\end{align*}
We also deduce the generating series for the multiplicities of the irreducible representations 
in $S^{\textbf{(G)}}_{2k+1}(\Gamma[2])$ for $k\geqslant 0$

\begin{itemize}
\item for 
$
s[\varpi]\in 
\left\{
s[6], s[4,2], s[4,1^2], s[3,2,1], s[3,1^3], s[2^3], s[2^2,1^2], s[2,1^4]
\right\}
$
we have
\[
m_{{s[\varpi]}}(S^{\textbf{(G)}}_{2k+1}(\Gamma[2]))=m_{{s[\varpi]}}(S_{2k+1}(\Gamma[2]))
\]
\item for $s[\varpi] \in \left\{
s[5,1], s[3^2], s[1^6]
\right\}$
we have
\[
m_{{s[\varpi]}}(S^{\textbf{(G)}}_{2k+1}(\Gamma[2]))=
m_{{s[\varpi]}}(S_{2k+1}(\Gamma[2]))-
m_{{s[\varpi]}}(S^{\textbf{(P)}}_{2k+1}(\Gamma[2]))
\]
\end{itemize}

so
\begin{center}
\begin{Tabular}[1.35]{c|c|c|c}
$s[\varpi]$ & $s[5,1]$ & $s[3^2]$ & $s[1^6]$\\
\hline
$\sum_{k \geqslant 0}m_{{s[\varpi]}}(S^{\textbf{(G)}}_{2k+1}(\Gamma[2]))\,t^{2k+1}$   
& $\frac{t^{15}(1+t^2-t^6)}{((1-t^4)(1-t^6))^2}$
& $\frac{t^{11}(1+t^{8}-t^{12})}{(1-t^2)(1-t^4)(1-t^6)(1-t^{12})}$
& $\frac{t^{15}(1+t^{2}-t^{12})}{(1-t^4)(1-t^6)(1-t^{10})(1-t^{12})}$
\end{Tabular}
\end{center}

The next table gives the first few isotypical decompositions of $S_{2k+1}(\Gamma[2])$, we indicate the
multiplicities of Saito-Kurakawa lifts in blue and those of the general type in black, we put 
${\color{blue} d_P}=\dim S^{\textbf{(P)}}_{k}(\Gamma[2])$ 
and $d_G=\dim S^{\textbf{(G)}}_{k}(\Gamma[2])$

\begin{footnotesize}
\begin{center}
\[
\begin{array}{c|c|c|c|c|c|c|c|c|c|c|c|c}
s[\varpi] & s[6] & s[5,1] &  s[4,2] & s[4,1^2] & s[3^2] & s[3,2,1] & s[3,1^3] & s[2^3] & s[2^2,1^2] & s[2,1^4] & s[1^6] & \\
\dim s[\varpi]& 1& 5 & 9 & 10 & 5 & 16 & 10 & 5 & 9 & 5 & 1&\\
\hline
\hline
2k+1 & & & & & & & & & & & & d_G+{\color{blue} d_P} \\
\hline
1 & 0 & 0 & 0 & 0 & 0 & 0 & 0 & 0 & 0 & 0 & 0 & 0+{\color{blue} 0}\\
3 & 0 & 0 & 0 & 0 & 0 & 0 & 0 & 0 & 0 & 0 & 0 & 0+{\color{blue} 0}\\
5 & 0 & 0 & 0 & 0 & 0 & 0 & 0 & 0 & 0 & 0 & {\color{blue} 1} & 0+{\color{blue} 1} \\
7 &  0 & 0 & 0 & 0 & {\color{blue} 1} & 0 & 0 & 0 & 0 & 0 & 0 & 0+ {\color{blue} 5}\\
9 & 0 & 0 & 0 & 0 &  {\color{blue} 1} & 0 & 0 & 0 & 1 & 0 &  {\color{blue} 1} & 9+ {\color{blue} 6}\\
11 &  0 &   {\color{blue} 1} &  0 &  1 &  1+ {\color{blue} 1} &  0 &  0 &  0 &  1 &  0 &   {\color{blue} 1} & 24+ {\color{blue} 11}\\
13 & 0 &  0 &  0 &  1 &  1+ {\color{blue} 2} &  1 &  0 &  0 &  3 &  0 &   {\color{blue} 1} & 58+ {\color{blue} 11}\\
15 &  0 &  1+ {\color{blue} 1} &  0 &  3 &  2+ {\color{blue} 2} &  2 &  0 &  0 &  3 &  0 &  1+ {\color{blue} 1} & 105+ {\color{blue} 16}
\end{array}
\]
\end{center}
\end{footnotesize}

\subsection{Isotypical decomposition of 
\texorpdfstring{$M_{2k}(\Gamma[2])$}{M {2k}(Gamma[2])}} 

We start by giving the  generating series for the multiplicities of the irreducible representations of $\mathfrak{S}_6$ in $M_{2k}(\Gamma[2])$.
As a corollary of Theorem \ref{Igusa2}, we deduce the following. 
\begin{cor}
The following table gives the generating series for the multiplicity of 
the irreducible representations of $\,\fS_6$ in $M_{2k}(\Gamma[2])$
\end{cor}
\begin{center}
\begin{Tabular}[1.3]{c|c}
$s[\varpi]$ & $\sum_{k \geqslant 0}m_{{s[\varpi]}}(M_{2k}(\Gamma[2]))\,t^{2k}$ \\
\hline
$s[6]$ & $\frac{1}{(1-t^4)(1-t^6)(1-t^{10})(1-t^{12})}$   \\
\hline
$s[5,1]$ & $\frac{t^{12}}{((1-t^4)(1-t^6))^2}$   \\
\hline
$s[4,2]$ & $\frac{t^4}{(1-t^2)(1-t^4)^2(1-t^{10})}$   \\
\hline
$s[4,1^2]$ & $\frac{t^{12}(1+t^4)}{(1-t^2)(1-t^4)(1-t^6)(1-t^{12})}$   \\
\hline
$s[3^2]$  & $\frac{t^{20}}{(1-t^2)(1-t^4)(1-t^6)(1-t^{12})}$   \\
\hline
$s[3,2,1]$ & $\frac{t^8(1+t^2+t^4+t^8)}{(1-t^2)(1-t^6)^2(1-t^{10})}$   \\
\hline
$s[3,1^3]$ & $\frac{t^6(1+t^4)}{(1-t^2)(1-t^4)(1-t^6)(1-t^{12})}$  \\
\hline
$s[2^3]$ & $\frac{t^2}{(1-t^2)(1-t^4)(1-t^6)(1-t^{12})}$   \\
\hline
$s[2^2,1^2]$ & $\frac{t^{14}}{(1-t^2)(1-t^4)^2(1-t^{10})}$  \\
\hline
$s[2,1^4]$ & $\frac{t^6}{((1-t^4)(1-t^6))^2}$ \\
\hline
$s[1^6]$ & $\frac{t^{30}}{(1-t^4)(1-t^6)(1-t^{10})(1-t^{12})}$  \\
\end{Tabular}
\end{center}
\begin{proof}
Let $G_{s[\varpi]}$ be the generating series of the multiplicity of the irreducible representation $s[\varpi]$ in $M_{k}(\Gamma[2])$. Then the generating series of the multiplicity of the irreducible representation $s[\varpi]$ in $M_{2k}(\Gamma[2])$ is given by 
$(G_{s[\varpi]}(t)+G_{s[\varpi]}(-t))/2$.
\end{proof}

\begin{remark}
As a sanity check, we verify
\[
\sum_{k \geqslant 0}
\sum_{s[\varpi]}\dim (s[\varpi])
m_{{s[\varpi]}}(M_{2k}(\Gamma[2]))\,t^{2k}=\frac{(1+t^2)(1+t^4)}{(1-t^2)^4}
=\sum_{k \geqslant 0}\dim M_{2k}(\Gamma[2])\,t^{2k}
\]
in agreement with Theorem \ref{Igusa}.
\end{remark}
Let us give the first few isotypical decompositions of $M_{2k}(\Gamma[2])$, we put $d=\dim M_{2k}(\Gamma[2])$

\begin{footnotesize}
\begin{center}
\[
\begin{array}{c|c|c|c|c|c|c|c|c|c|c|c|c}
s[\varpi] & s[6] & s[5,1] &  s[4,2] & s[4,1^2] & s[3^2] & s[3,2,1] & s[3,1^3] & s[2^3] & s[2^2,1^2] & s[2,1^4] & s[1^6] & \\
\dim s[\varpi]& 1& 5 & 9 & 10 & 5 & 16 & 10 & 5 & 9 & 5 & 1&\\
\hline
\hline
k & & & & & & & & & & & & d \\
\hline
0 & 1 & 0 & 0 & 0 & 0 & 0 & 0 & 0 & 0 & 0 & 0 & 1\\
2 & 0 & 0 & 0 & 0 & 0 & 0 & 0 & 1 & 0 & 0 & 0 & 5\\
4 &  1 & 0 & 1 & 0 & 0 & 0 & 0 & 1 & 0 & 0 & 0 & 15\\
6 &  1 & 0 & 1 & 0 & 0 & 0 & 1 & 2 & 0 & 1 & 0 & 35\\
8 &  1 & 0 & 3 & 0 & 0 & 1 & 1 & 3 & 0 & 0 & 0 & 69\\
10 &  2 & 0 & 3 & 0 & 0 & 2 & 3 & 4 & 0 & 2 & 0 & 121\\
12 & 3 & 1 & 6 & 1 & 0 & 3 & 4 & 5 & 0 & 2 & 0 & 195\\
14 & 2 & 0 & 7 & 1 & 0 & 6 & 6 & 8 & 1 & 3 & 0 & 295\\
16 & 4 & 2 & 11 & 3 & 0 & 8 & 8 & 9 & 1 & 4 & 0 & 425
\end{array}
\]
\end{center}
\end{footnotesize}

From (\ref{OrthoDecom}) we have the following decomposition of the space $M_{2k}(\Gamma[2])$: 
\[
M_{2k}(\Gamma[2])=E_{2k}(\Gamma[2]) \oplus S_{2k}(\Gamma[2])
\quad
\text{for}
\quad k\geqslant 0
\]
so we are going to give  the isotypical decomposition of the Eisenstein and the cuspidal parts according to (\ref{DecompPackets}).

\subsubsection{\textbf{Isotypical decomposition of $E_{2k}(\Gamma[2])$} }\label{EevenQ} 
From (\ref{DecompPackets}), we have 
\[
E_{2k}(\Gamma[2])=SE_{2k}(\Gamma[2])\oplus KE_{2k}(\Gamma[2])
\quad
\text{for}
\quad k\geqslant 0.
\]
Recall that  $SE_{2k}(\Gamma[2])$ corresponds to Siegel-Eisenstein series 
of weight $2k$ while
$KE_{2k}(\Gamma[2])$ to Klingen-Eisenstein series of weight $2k$. From  \cite[Section 13]{CvdGG}, 
we deduce that
\[
\iso_{\fS_6}SE_{0}(\Gamma[2])=s[6], \; 
\iso_{\fS_6}SE_{2}(\Gamma[2])=s[2^3] 
\text{ and } 
\iso_{\fS_6}SE_{2k}(\Gamma[2])=s[6]+ s[4,2] + s[2^3]  \text{\, for\, }  k\geqslant 2.
\]
By construction, Klingen-Eisenstein series of weight $2k$ for $k>2$ (to ensure convergence) on $\Gamma[2]$ come 
from cusp forms of weight $2k$ on $\Gamma(2)$. Since $\SL(2,\Z)/\Gamma(2)\cong \mathfrak{S}_3$, 
the space $S_{2k}(\Gamma(2))$ can be decomposed into irreducible representations of 
$\mathfrak{S}_3$ and this is given by
\begin{equation}\label{DecomCuspLevel2}
\iso_{\fS_3}S_{2k}(\Gamma(2))=d_{1,2k}s[3]+(d_{1,2k}+d_{2,2k})s[2,1]+d_{4,2k}s[1^3],
\end{equation}
where the integers $d_{N,k}$ are defined as in (\ref{def_d_k}). Note that the previous formula 
corrects \cite[Proposition 6.1]{Petersen}.
Recall that the dimension of the space $S_{2k}(\Gamma(2)) \cong S_{2k}(\Gamma_0(4))$ 
is $k-2$ for $k\geqslant 3$ and $0$ otherwise.

From the previous table, Theorem \ref{Igusa} and the isotypical decomposition of $SE_{2k}(\Gamma[2])$,  
we deduce that 
$
\iso_{\fS_6}KE_{0}(\Gamma[2])
=\iso_{\fS_6}KE_{2}(\Gamma[2])
=0.
$
For $k\geqslant 2$, Proposition 13.1 of \cite{CvdGG} gives 
\[
\iso_{\fS_6}KE_{2k}(\Gamma[2])= \text{Ind}_{H}^{\mathfrak{S}_6}\left(\iso_{\fS_3}S_{2k}(\Gamma(2))\right)
\]
where $H$ denotes the stabiliser in $\mathfrak{S}_6$  of one of the $1$-dimensional boundary components of the Satake compactification of $\Gamma[2] \backslash\mathfrak{H}_2$. 
Note that $H$ is of order 48 and recall (for more details see \cite[Section 2]{CvdGG}) that we have
\begin{center}
\begin{tabular}{c|c|c|c}
$s[\varpi]$ & $s[3]$ & $s[2,1]$ & $s[1^3]$ \\
\hline
${\rm Ind}_{H}^{\mathfrak{S}_6}\left(s[\varpi] \right)$ & 
$s[6]\oplus s[5,1]\oplus s[4,2]$ &
$s[4,2]\oplus s[3,2,1]\oplus s[2^3]$ &
$s[3,1^3]\oplus s[2,1^4]$
\end{tabular}
\end{center}
Therefore, for $k\geqslant 2$, the isotypical decomposition of the space
$KE_{2k}(\Gamma[2])$ is as follows
\begin{align*}
\iso_{\fS_6}KE_{2k}(\Gamma[2])=&
d_{1,2k}(s[6] + s[5,1])+
(2d_{1,2k}+d_{2,2k})s[4,2] +
(d_{1,2k}+d_{2,2k})(s[3,2,1]+s[2^3])\\ 
&+  
d_{4,2k}(s[3,1^3]+s[2,1^4]).
\end{align*}
Putting this together we get the generating series  for the multiplicities of the irreducible 
representations of $\mathfrak{S}_6$ in $SE_{2k}(\Gamma[2])$ and $KE_{2k}(\Gamma[2])$

\smallskip

\begin{center}
\begin{Tabular}[1.25]{c|c|c}
$s[\varpi]$ & 
$\sum_{k \geqslant 0}m_{{s[\varpi]}}(SE_{2k}(\Gamma[2]))\,t^{2k}$ & 
$\sum_{k \geqslant 0}m_{{s[\varpi]}}(KE_{2k}(\Gamma[2]))\,t^{2k}$ \\
\hline
$s[6]$ & $\frac{1-t^2+t^4}{1-t^2}$ & $\frac{t^{12}}{(1-t^4)(1-t^6)}$   \\
\hline
$s[5,1]$ & $0$  & $\frac{t^{12}}{(1-t^4)(1-t^6)}$  \\
\hline
$s[4,2]$ & $\frac{t^4}{1-t^2}$  & $\frac{t^{8}}{(1-t^2)(1-t^4)}$  \\
\hline
$s[3,2,1]$ & $0$ & $\frac{t^{8}}{(1-t^2)(1-t^6)}$  \\
\hline
$s[3,1^3]$ & $0$ & $\frac{t^{6}}{(1-t^4)(1-t^6)}$ \\
\hline
$s[2^3]$ &  $\frac{t^2}{1-t^2}$  & $\frac{t^{8}}{(1-t^2)(1-t^6)}$  \\
\hline
$s[2,1^4]$ & $0$ & $\frac{t^{6}}{(1-t^4)(1-t^6)}$ \\
\end{Tabular}
\end{center}
and $0$ for $s[4,1^2], s[3^2], s[2^2,1^2]$ and $s[1^6]$.
\begin{remark}
The isotypical decomposition of the space $S_{2k}(\Gamma(2))$ in  \cite[Proposition 13.1]{CvdGG} was written as
\[
\iso_{\fS_3}S_{2k}(\Gamma(2))=\text{{Sym}}^k(s[2,1])-
\left\{
\begin{tabular}{ccc}
$s[2,1]$  & \text{if} & $k=1$\\
$s[3]+s[2,1]$ &  \text{if} & $k\geqslant 2$.
\end{tabular}
\right.
\]
This directly gives $\dim S_{4k}(\Gamma(2))=2(k-1)$ for $k\geqslant 1$, this can also be checked by using 
(\ref{dimnew2new4}), (\ref{DecomCuspLevel2}) and $\dim S_{4k}(\SL(2,\Z))=\lfloor k/3 \rfloor$.
As a sanity check, we verify
\begin{align*}
\sum_{k \geqslant 0}
\sum_{s[\varpi]}\dim (s[\varpi])
m_{{s[\varpi]}}(KE_{2k}(\Gamma[2]))t^{2k}  &=  
15\frac{t^6}{(1-t^2)^2}=15 \sum_{k \geqslant 0} \dim S_{2k}(\Gamma(2))t^{2k}
\end{align*}
in agreement with $KE_{2k}(\Gamma[2]) \cong S_{2k}(\Gamma(2))^{\oplus 15}$.
\end{remark}

The next table gives the first few isotypical decompositions of $E_{2k}(\Gamma[2])$ where we indicate the
multiplicities of the Siegel-Eisenstein part in blue and those of the Klingen-Eisenstein part in black, we put 
${\color{blue} d_F}=\dim SE_{k}(\Gamma[2])$ 
and $d_Q=\dim KE_{k}(\Gamma[2])$

\begin{footnotesize}
\begin{center}
\[
\begin{array}{c|c|c|c|c|c|c|c|c|c|c|c|c}
s[\varpi] & s[6] & s[5,1] &  s[4,2] & s[4,1^2] & s[3^2] & s[3,2,1] & s[3,1^3] & s[2^3] & s[2^2,1^2] & s[2,1^4] & s[1^6] & \\
\dim s[\varpi]& 1& 5 & 9 & 10 & 5 & 16 & 10 & 5 & 9 & 5 & 1&\\
\hline
\hline
k & & & & & & & & & & & & d={\color{blue} d_F}+d_Q \\
\hline
0 & {\color{blue} 1} & 0 & 0 & 0 & 0 & 0 & 0 & 0 & 0 & 0 & 0 & {\color{blue} 1}+0\\
2 & 0 & 0 & 0 & 0 & 0 & 0 & 0 & {\color{blue} 1} & 0 & 0 & 0 & {\color{blue} 5}+0\\
4 &  {\color{blue} 1} & 0 & {\color{blue} 1} & 0 & 0 & 0 & 0 & {\color{blue} 1} & 0 & 0 & 0 & {\color{blue} 15}+0\\
6 &  {\color{blue} 1} & 0 & {\color{blue} 1} & 0 & 0 & 0 & 1 & {\color{blue} 1} & 0 & 1 & 0 & {\color{blue} 15}+ 15\\
8 &  {\color{blue} 1} & 0 & {\color{blue} 1}+1 & 0 & 0 & 1 & 0 & {\color{blue} 1}+1 & 0 & 0 & 0 &  {\color{blue} 15}+ 30\\
10 &  {\color{blue} 1} & 0 & {\color{blue} 1}+1 & 0 & 0 & 1 & 1 & {\color{blue} 1}+1 & 0 & 1 & 0 &  {\color{blue} 15}+ 45\\
12 & {\color{blue} 1}+1 & 1 & {\color{blue} 1}+2 & 0 & 0 & 1 & 1 & {\color{blue} 1}+1 & 0 & 1 & 0 &  {\color{blue} 15}+ 60\\
14 & {\color{blue} 1} & 0 & {\color{blue} 1}+2 & 0 & 0 & 2 & 1 & {\color{blue} 1}+2 & 0 & 1 & 0 &  {\color{blue} 15} + 75\\
\end{array}
\]
\end{center}
\end{footnotesize}

\subsubsection{\textbf{Isotypical decomposition of $S_{2k}(\Gamma[2])$}} 
For $k\geqslant 2$, we know, see \cite[ pp. 882-883]{Tsushima1982}, that 
\begin{equation}\label{Dim_S2k}
\dim S_{2k}(\Gamma[2])=\dim M_{2k}(\Gamma[2]) -15(k-2)-15=(k-2)(2k^2+7k-24)/3.
\end{equation}
We also know that 
$
S_{0}(\Gamma[2])=S_{2}(\Gamma[2])=\{0\}$. 
The generating series for the dimension of the spaces $S_{2k}(\Gamma[2])$ is therefore given by
\[
\sum_{k \geqslant 0}\dim S_{2k}(\Gamma[2])\,t^{2k}=
\frac{t^6(5+4t^2-5t^4)}{(1-t^2)^4}.
\]
By definition of cusp forms, for $k \geqslant 0$ we have
\[
m_{{s[\varpi]}}(S_{2k}(\Gamma[2]))=
m_{{s[\varpi]}}(M_{2k}(\Gamma[2]))-
\Big(
m_{{s[\varpi]}}(SE_{2k}(\Gamma[2]))+
m_{{s[\varpi]}}(KE_{2k}(\Gamma[2]))
\Big).
\]
So the generating series for the multiplicities of the irreducible representations of $\mathfrak{S}_6$
in  $S_{2k}(\Gamma[2])$ are given by 
\smallskip

\begin{center}
\begin{Tabular}[1.35]{c|c}
$s[\varpi]$ & $\sum_{k \geqslant 0}m_{{s[\varpi]}}(S_{2k}(\Gamma[2]))\,t^{2k}$ \\
\hline
$s[6]$ & $\frac{t^{10}(1+t^2-t^{12})}{(1-t^4)(1-t^6)(1-t^{10})(1-t^{12})}$   \\
\hline
$s[5,1]$ & $\frac{t^{16}(1+t^2-t^6)}{((1-t^4)(1-t^6))^2}$   \\
\hline
$s[4,2]$ & $\frac{t^8(1+t^6-t^{10})}{(1-t^2)(1-t^4)^2(1-t^{10})}$   \\
\hline
$s[4,1^2]$ & $\frac{t^{12}(1+t^4)}{(1-t^2)(1-t^4)(1-t^6)(1-t^{12})}$   \\
\hline
$s[3^2]$  & $\frac{t^{20}}{(1-t^2)(1-t^4)(1-t^6)(1-t^{12})}$   \\
\hline
$s[3,2,1]$ & $\frac{t^{10}(1+t^2+2t^4+t^8-t^{14})}{(1-t^2)(1-t^6)^2(1-t^{10})}$   \\
\hline
$s[3,1^3]$ & $\frac{t^8(1+t^2+t^{10}-t^{12})}{(1-t^2)(1-t^4)(1-t^6)(1-t^{12})}$  \\
\hline
$s[2^3]$ & $\frac{t^6(1+t^8-t^{12})}{(1-t^2)(1-t^4)(1-t^6)(1-t^{12})}$   \\
\hline
$s[2^2,1^2]$ & $\frac{t^{14}}{(1-t^2)(1-t^4)^2(1-t^{10})}$  \\
\hline
$s[2,1^4]$ & $\frac{t^{10}(1+t^2-t^6)}{((1-t^4)(1-t^6))^2}$ \\
\hline
$s[1^6]$ & $\frac{t^{30}}{(1-t^4)(1-t^6)(1-t^{10})(1-t^{12})}$  \\
\end{Tabular}
\end{center}
\medskip

Conjecture 6.6, proved by R\"osner 
(see \cite[Section 5]{Roesner}), tells us that
only the Arthur packets $\textbf{(P)}$ and $\textbf{(G)}$ can occur in $S_{2k}(\Gamma[2])$ and so 
\[
S_{2k}(\Gamma[2])=
S^{\textbf{(G)}}_{2k}(\Gamma[2])\oplus S^{\textbf{(P)}}_{2k}(\Gamma[2]).
\]
Moreover Conjecture 6.6 of \cite{BFvdG} gives the isotypical decomposition of $S^{\textbf{(P)}}_{2k}(\Gamma[2])$:
\begin{align*}
\iso_{\fS_6}S^{\textbf{(P)}}_{2k}(\Gamma[2])=&
d_{1,4k-2}\,s[6] +
(d_{1,4k-2}+d^+_{2,4k-2}) \, s[4,2] +
(d_{2,4k-2}+d^{-}_{2,4k-2}) \, s[2^3] 
\end{align*}
where the integers $d_{N,k}$ and $d^{\pm}_{N,k}$ are defined as in (\ref{def_d_k}).
The generating series for the multiplicity of the irreducible representations $s[6]$, $s[4,2]$ and $s[2^3]$ in 
$S^{\textbf{(P)}}_{2k}(\Gamma[2])$ for $k\geqslant 0$ are therefore given by

\begin{center}
\begin{Tabular}[1.35]{c|c|c|c}
$s[\varpi]$ & $s[6]$ & $s[4,2]$ & $s[2^3]$\\
\hline
$\sum_{k \geqslant 0}m_{{s[\varpi]}}(S^{\textbf{(P)}}_{2k}(\Gamma[2]))\,t^{2k}$ 
& $\frac{t^{10}}{(1-t^2)(1-t^{6})}$
& $\frac{t^{8}}{(1-t^2)(1-t^4)}$ 
& $\frac{t^{6}}{(1-t^2)(1-t^4)}$ 
\end{Tabular}
\end{center}
\smallskip
Keeping in mind that $\dim s[6]=1$, $\dim s[4,2]=9$ and $\dim s[2^3]=5$, we deduce
\[
\sum_{k\geqslant 0}
\dim S^{\textbf{(P)}}_{2k}(\Gamma[2]) t^{2k}=
\frac{t^6(5+14t^2+15t^4+10t^6)}{(1-t^4)(1-t^{6})}.
\]
From this we get the generating series for $\dim S^{\textbf{(G)}}_{2k}(\Gamma[2])$:
\[
\sum_{k\geqslant 0}
\dim S^{\textbf{(G)}}_{2k}(\Gamma[2]) t^{2k}=
\sum_{k\geqslant 0}
\bigl(\dim S_{2k}(\Gamma[2])-\dim S^{\textbf{(P)}}_{2k}(\Gamma[2])\bigr) t^{2k}\\
=
\frac{t^8(10+21t^2+9t^4-t^6-15t^8)}{(1-t^2)^2(1-t^4)(1-t^{6})}
\]
and also the generating series for the multiplicity of the irreducible representations 
in $S^{\textbf{(G)}}_{2k}(\Gamma[2])$ for $k\geqslant 0$
\begin{itemize}
\item for 
$
s[\varpi]\in \left\{s[5,1], s[4,1^2], s[3^2], s[3,2,1], s[3,1^3], s[2^2,1^2], s[2,1^4], s[1^6]\right\}
$
we have
\[
m_{{s[\varpi]}}(S^{\textbf{(G)}}_{2k}(\Gamma[2]))=m_{{s[\varpi]}}(S_{2k}(\Gamma[2]))
\]
\item  for 
$s[\varpi] \in \left\{s[6], s[4,2], s[2^3]\right\}$
we have
\[
m_{{s[\varpi]}}(S^{\textbf{(G)}}_{2k}(\Gamma[2]))=
m_{{s[\varpi]}}(S_{2k}(\Gamma[2]))-
m_{{s[\varpi]}}(S^{\textbf{(P)}}_{2k}(\Gamma[2]))
\]
\end{itemize}
so

\begin{center}
\begin{Tabular}[1.35]{c|c|c|c}
$s[\varpi]$ & $s[6]$ & $s[4,2]$ & $s[2^3]$\\
\hline
$\sum_{k \geqslant 0}m_{{s[\varpi]}}(S^{\textbf{(G)}}_{2k}(\Gamma[2]))\,t^{2k}$
& $\frac{t^{20}(1+t^2+t^4-t^{12}-t^{14})}{(1-t^4)(1-t^6)(1-t^{10})(1-t^{12})}$
& $\frac{t^{12}(1+t^2-t^{10})}{(1-t^2)(1-t^4)^2(1-t^{10})}$ 
& $\frac{t^{12}(1+t^2-t^{12})}{(1-t^2)(1-t^4)(1-t^6)(1-t^{12})}$
\end{Tabular}
\end{center}

\section{Euler characteristics of local systems} \label{sec-euler} 
Let $\AA_2[2]$ be the moduli space of principally polarized abelian surfaces equipped with a full level two structure. This is a smooth Deligne-Mumford stack 
defined over ${\rm Spec}(\Z[1/2])$. The space $\AA_2[2]$ comes equipped with a natural action of the symmetric group $\fS_6 \cong {\rm GSp}(4,\Z/2)$. 
Let $\pi: \mathcal X \to \AA_2[2]$ denote the universal object and define the local system $\V=R^1\pi_* \C$ on $(\AA_2[2])_{\C}$. To each pair of integers $(l,m)$, with $l \geq m \geq 0$, we get a local system $\V_{l,m}$ from the corresponding irreducible representation of $\mathrm{GSp}(4)$. 

The moduli space $\AA_2[2]$ can be identified with the disjoint union $\mathcal M_2[2] \sqcup \AA_{1,1}[2]$, where $\mathcal M_2[2]$ is the moduli space of tuples $(C,r_1,\ldots,r_6)$ where $C$ is a genus $2$ curve (note that curves are assumed to be projective, irreducible and smooth), and $\AA_{1,1}[2]$, is the moduli space of tuples $(C,r_1,\ldots,r_6)$ where $C$ is an unordered pair of elliptic curves intersecting in the point at infinity, and in both cases $(r_1,\ldots,r_6)$ is a $6$-tuple of marked Weierstra\ss\, points (distinct from infinity, in the elliptic curve case).
With this identification, the action of $\fS_6$ is by permutation of the marked Weierstra\ss\, points.
For more details about the above cf. \cite{BFvdG}.

 Let $\psi: (\AA_2[2])_{\C} \to A_2[2]$ denote the coarse moduli space and put $V_{l,m}=\psi_*\V_{l,m}$. Similarly we have the coarse moduli spaces $M_2[2]$ and $A_{1,1}[2]$ and by abuse of notation, $V_{l,m}$ will also denote the restriction of $V_{l,m}$ on $A_2[2]$ to any of these subspaces. 
There is an induced action of $\fS_6$ on the compactly supported Betti cohomology groups $H^i_c$ of these spaces with coefficients in $V_{l,m}$. 

We will now identify the representation ring $\Z[\fS_6]$ of $\fS_6$ with the ring of symmetric polynomials. With this interpretation, for a partition $\varpi$ of $6$, $s[\varpi]$ equals the corresponding Schur polynomial. Let also $p_i$ denote the $i$th power sum polynomial and put $p_{\varpi}=p_1^{\varpi_1}\cdots p_6^{\varpi_6}$.  
Moreover, for any $\lambda=(l,m)$, let $s_{<\lambda>}$ denote the symplectic Schur polynomial in four variables associated to $\lambda$, see \cite[Appendix A]{FH}.

\subsection{Formulas for the Euler characteristics} 
The aim of this section is to give a formula, for any $\lambda=(l,m)$, of the $\fS_6$-equivariant Euler characteristic, 
\[E_c(A_2[2],V_{\lambda})= \sum_{\varpi \vdash 6} E_{c,\varpi}(A_2[2],V_{\lambda}) s[\varpi]  \in \Z[\fS_6],
\]
where 
\[
E_{c,\varpi}(A_2[2],V_{\lambda})=\sum^4_{i=0} (-1)^i m_{s[\varpi]}\bigl(H^i_c(A_2[2],V_{\lambda})\bigr)
 \in \Z.
\]

Stratify the spaces $X_1=M_2[2]$ and $X_2=A_{1,1}[2]$ (or equivalently $M_2$ and $A_{1,1}$, the corresponding coarse moduli spaces without a level two structure), into strata $\Sigma_i(G)$, for $G$ a finite group, consisting of the curves corresponding to points of $X_i$ whose automorphism group equals $G$. Let $E_c(\Sigma_i(G))$ denote 
the Euler characteristic of $\Sigma_i(G)$. Say that $g \in G$ has eigenvalues $\xi_1(g)$, $\xi_2(g)$, $\xi_3(g)$ and $\xi_4(g)$ when acting on $H^1(C,\C)$ of a curve $C \in \Sigma_i(G)$. Say furthermore that the induced action of $g \in G$ on the six Weierstra\ss\, points of a curve $C \in \Sigma_i(G)$ has $\mu_j$ cycles of length $j$ for $j=1,\ldots,6$, giving a partition $\mu(g,G,i)$. Note that this data will be constant on the strata, i.e. independent of the choice of $C \in  \Sigma_i(G)$. On a strata $\Sigma_i(G)$ the Euler characteristic $E_c(\Sigma_i(G),V_{\lambda})=E_c(\Sigma_i(G)) \cdot \dim V_{\lambda}^{G}$ and hence 
\begin{equation} 
  E_c(A_2,V_{\lambda})=\sum_{i=1}^2 \sum_{G} \frac{E_c \bigl(\Sigma_i(G) \bigr)}{|G|}  \sum_{g \in G}
  s_{<\lambda>} \bigl(\xi_1(g), \xi_2(g),\xi_3(g),\xi_4(g) \bigr) \in \Z.
  \end{equation}
This method was used in \cite{Getzler} to find a formula for $E_c(M_2,V_{\lambda})$ for any $\lambda$. Adding the level two structure we need to take the action of $\fS_6$ on the Weierstra\ss\, points into account and one finds that,
\begin{equation} \label{eq-numeric}
  E_c(A_2[2],V_{\lambda})=\sum_{i=1}^2 \sum_{G} \frac{E_c \bigl(\Sigma_i(G) \bigr)}{|G|}  \sum_{g \in G}
  s_{<\lambda>} \bigl(\xi_1(g), \xi_2(g),\xi_3(g),\xi_4(g) \bigr) p_{\mu(g,G,i)} \in \Z[\fS_6].
  \end{equation}
This formula can be compared to the one in \cite[Section 9]{BvdG}.

In the two following sections, we will describe how to find the necessary information to compute \eqref{eq-numeric} for any $\lambda$.

\subsection{Smooth curves of genus two} 
The stratification by automorphism group $G$ for $M_2$ was found by Bolza \cite{Bolza}, see below. We follow the description in \cite[Section 4]{Getzler}. 

Curves $C$ of genus $2$ are described by equations $C_f:y^2-f(x)=0$, where $f$ square-free polynomial of degree $5$ or $6$.
The automorphism group $G_f$ of a curve $C_f$ is equal to the subgroup of $\mathrm{SL}(2,\C) \times \C^{\times}$ consisting of elements 
\[
(\gamma,u)=\Bigl(\Bigl( \begin{array}{cc} a & b \\ c & d \end{array} \Bigr),u\Bigr) \in \mathrm{SL}(2,\C) \times \C^{\times}
\quad \text{such that} \quad
f(x)=(\gamma,u) \cdot f(x)=\frac{(cx+d)^6}{u^2} \, f\Bigl(\frac{ax+b}{cx+d}\Bigr)
\]
quotiented by the subgroup generated by the element $(-\mathrm{id},-1) \in \mathrm{SL}(2,\C) \times \C^{\times} $.
These groups will be given as pairs $(\Gamma_f,\rho_f)$, where $\Gamma_f$ is a subgroup of $\mathrm{SL}(2,\C)$ that preserves the set of roots of $f$ and $\rho_f$ is a character of $\Gamma_f$ such that $G_f \cong \Gamma_f(\rho_f)/<(-\mathrm{id},-1)>$ where  
\[\Gamma_f(\rho_f)=\{(\gamma,u) \in \mathrm{SL}(2,\C) \times \C^{\times}:u^2=\rho_f(\gamma) \}.\]
There is an isomorphism $H^1(C_f,\C) \cong  H^0(C_f,\Omega) \oplus H^0(C_f,\Omega)^{\vee}$ and $H^0(C_f,\Omega)$ has a basis consisting of the differentials $\omega_0=dx/y$, $\omega_1=xdx/y$. 
The action of $(\gamma,u) \in \Gamma_f(\rho_f)$ on the basis $(\omega_0,\omega_1)$ equals
\[
(\gamma,u)(\omega_0,\omega_1)=(u^{-1}(c\omega_1+d\omega_0),u^{-1}(a\omega_1+b\omega_0)),
\]
see \cite[Proposition 2]{Getzler}. This tells us that if $\lambda_{\gamma}$ is an  eigenvalue of $\gamma \in \Gamma_f$ then $\xi_1=\lambda_{\gamma}u^{-1}$, $\xi_2=\lambda_{\gamma}^{-1}u^{-1}$, $\xi_3=\lambda_{\gamma}^{-1}u$ and $\xi_4=\lambda_{\gamma}u$, are the eigenvalues of $(\gamma,u) \in G_f$ acting on $H^1(C_f,\C)$. 
Finally, we need to determine the action of every $\gamma \in \Gamma_f$ on the roots of $f$, together with the point at infinity in the case that the degree of $f$ equals five. We will choose an ordering of the roots of $f$ (and possibly infinity) and denote the induced permutation by $\sigma_{\gamma}(f)$.

There are seven strata for $M_2[2]$ corresponding to the different automorphism groups $(\Gamma,\rho)$: $(C_2,\mathrm{id})$, $(C_4,\chi^2)$, $(Q_8,\chi_0)$, $(Q_{12},\chi_0)$, $(O,\chi)$, $(Q_{24},\chi_+)$ and $(C_{10},\chi^6)$. Here $C_n$ denotes the cyclic group with $n$ elements, $Q_{4n}$ the quaternionic group with $4n$ elements and $O$ is the binary octahedral group with $48$ elements. The characters are defined as in \cite[pp. 124--125]{Getzler}. The groups $\Gamma \subset \mathrm{SL}(2,\C)$ can be generated by one element $S\in \mathrm{SL}(2,\C)$ in the abelian case, and two elements $S$ and 
$U=\left(\begin{smallmatrix} 0 & 1 \\ -1 & 0 \end{smallmatrix}\right)$
in the non-abelian case. Put $\epsilon_n=e^{2\pi i/n}$. 
For further descriptions of these groups and characters, together with the computation of the Euler characteristics of the different strata, we refer to \cite[Section 2]{Getzler}. The information in the following table can be gotten from straightforward computations.

\bigskip
\vbox{
\centerline{\def\quad{\hskip 0.3em\relax}
\vbox{\offinterlineskip
\hrule
\halign{&\vrule#& \quad \hfil#\hfil \strut \quad  \cr
height2pt&\omit&&\omit&&\omit&&\omit&&\omit&&\omit&&\omit&&\omit& \cr
& $(\Gamma,\rho)$ && $f \in \Sigma(G)$ && $E_c$ && $S$ && $\rho(S)$ && $\sigma_S$ && $\rho(U)$ && $\sigma_U$ & \cr
height2pt&\omit&&\omit&&\omit&&\omit&&\omit&&\omit&&\omit&&\omit& \cr
\noalign{\hrule}
height2pt&\omit&&\omit&&\omit&&\omit&&\omit&&\omit&&\omit&&\omit& \cr
& $(C_2,\mathrm{id})$ &&   && $-1$ && $\mathrm{diag}(-1,-1)$ && $ 1 $ && $\mathrm{id} $ &&  &&  & \cr
& $(C_4,\chi^2)$ && $x^6+\alpha x^4+\beta x^2+1$ && $3$ && $\mathrm{diag}(\epsilon_4,\epsilon^{-1}_4)$ && $-1$ && $(12)(34)(56)$&&  &&  & \cr
& $(Q_8,\chi_0)$ && $x(x^4+\alpha x^2+1)$ && $-2$ && $\mathrm{diag}(\epsilon_4,\epsilon^{-1}_4)$ && $1$ && $(23)(45)$ && $-1$ && $(16)(24)(35)$& \cr
& $(Q_{12},\chi_0)$ && $x^6+\alpha x^3 -1$ && $-2$ && $\mathrm{diag}(\epsilon_6,\epsilon^{-1}_6)$ && $ 1 $ && $(123)(456)$ && $-1$ && $(14)(25)(36)$& \cr
& $(O,\chi)$ && $x(x^4+1)$ && $1$ && $\frac{-1}{\sqrt{2}}\Bigl( \begin{array}{cc} 1 & \epsilon_8 \\ \epsilon_8^3 & 1 \end{array} \Bigr)$ && $\epsilon_8^3$ && $(1264)$ && $-1$ && $(16)(23)(45)$& \cr
& $(Q_{24},\chi_+)$ && $x^6-1$ && $1$ && $\mathrm{diag}(\epsilon_{12},\epsilon^{-1}_{12})$ && $-1$ && $(123456)$ && $1$ && $(16)(25)(34)$& \cr
& $(C_{10},\chi^6)$ && $x(x^5-1)$ && $1$ && $\mathrm{diag}(\epsilon_{10},\epsilon^{-1}_{10})$ && $\epsilon_{10}^6$ && $(23456)$ &&  && & \cr
height2pt&\omit&&\omit&&\omit&&\omit&&\omit&&\omit&&\omit&&\omit& \cr
} \hrule}
}}
\bigskip
\noindent
The table above provides sufficient information to compute the contribution of $\Sigma_1(G)$ to \eqref{eq-numeric}, for all abelian groups $G$. For the non-abelian groups $G$, the information that is missing is an eigenvalue $\lambda_{\gamma}$ for all $\gamma \in G$. This problem is solved for the quaternionic groups $Q_{4n}$ by noting that it consists of the matrices $\pm S^j$ and $\pm US^j$ for $j=1,\ldots,n$, and the latter all have eigenvalues $\epsilon_4,-\epsilon_4$. Eigenvalues for the elements of the binary octahedral group can be gotten from straightforward computation. 
\subsection{Pairs of elliptic curves} 
The stratification by automorphism group for $A_1$, the moduli space of elliptic curves, is given by the three groups $C_2$, $C_4$ and $C_6$. The corresponding strata have Euler characteristics $-1$, $1$ and $1$ respectively. The two latter strata are points which can be represented by the curves $y^2=x(x^2-1)$ and $y^2=x^3-1$ respectively. The automorphism group is generated by the element $y \mapsto -y$ for $C_2$, by $y \mapsto \epsilon_4 y, x\mapsto -x$ for $C_4$ and  $y \mapsto -y, x\mapsto \epsilon_3 x$ for $C_6$. For all elliptic curves of the form $y^2=f(x)$, $H^0(C_f,\Omega)$ has a basis consisting of the differential $\omega_0=dx/y$. The eigenvalues of the induced action on $H^1(C_f,\C)$ of the generators of the automorphism groups given above then equals $-1,-1$ for $C_2$, $\epsilon_4,-\epsilon_4$ for $C_4$ and $\epsilon_6,-\epsilon_6$ for $C_6$. The induced action of the generators on the Weierstra\ss\, points (after choosing an ordering), which correspond to the roots of $f(x)$ (together with infinity) equals $\mathrm{id}$, $(12)$ and $(123)$ respectively.

Consider now $A_{1,1} \cong (A_1 \times A_1)/\fS_2$, which is the moduli space of unordered pairs of elliptic curves. This has the consequence that a pair of equal (or isomorphic) elliptic curves $E \times E$ will have an extra automorphism that sends $(p_1,p_2) \in E \times E$ to $(p_2,p_1) \in E \times E$. There will therefore be seven possible automorphism groups for $A_{1,1}$, namely $C_2 \times C_2$, $ C_2 \wr \fS_2$, $C_2 \times C_4$, $C_2 \times C_6$, $C_4 \wr \fS_2$, $C_4 \times C_6$ and $ C_6 \wr \fS_2$, where $\wr$ denotes the wreath product. The Euler characteristics of the corresponding strata are directly found to be $1,-1,-1,-1,1,1$ and $1$, respectively. Take any two elliptic curves $E_1$ and $E_2$ with automorphism groups $G_1$ and $G_2$.  Since $H^1(E_1 \times E_2,\C) \cong H^1(E_1,\C) \oplus H^1(E_2,\C)$ it is straightforward, using the information for $A_1$ above, to compute the action of $G_1 \times G_2$ if $E_1$ and $E_2$ are not isomorphic, and of $G_1 \wr \fS_2$ if $E_1$ and $E_2$ are isomorphic. The action of $G_1 \times G_2$ (and of $G_1 \wr \fS_2$) on the six Weierstra\ss\, point that are distinct from infinity on both elliptic curves is also straightforward.

\section{Isotypical decomposition in the vector-valued case} \label{sec-vv}
In this section we assume that $j>0$, so we are dealing with vector-valued Siegel modular forms.
As a consequence of \cite[Proposition 1]{BGZ}, we have
\[
M_{0,j}(\Gamma[2])=\left\{0\right\} \text{ for any } j>0.
\]
Theorem A.5 by G. Chenevier in \cite{CvdG} tells us that
\[
M_{1,j}(\Gamma[2])=S_{1,j}(\Gamma[2])=\left\{0\right\} \text{ for any } j>0.
\]

For $k=2$, there is no dimension formula for the space $M_{2,j}(\Gamma[2])$
in general. A conjectural description of the isotypical decomposition of the space $S_{2,j}(\Gamma[2])$
is given in \cite[Conjecture 1.2]{CvdG}. As this conjecture has only been verified for $j<12$ we decided to 
not implement the isotypical decomposition of the space $M_{2,j}(\Gamma[2])$ in our code.
For $k=3$, the situation is also still conjectural but with more evidence. In fact only the isotypical decomposition
of $E_{c,\mathrm{Eis}}(A_2[2],V_{j,0})$ (see Theorem \ref{thm-main} to understand how this part contributes 
to the isotypical decomposition of the space $S_{3,j}(\Gamma[2])$, and then further in Remark~\ref{rem-k3}) is still conjectural so we decided to implement 
the isotypical decomposition of the space $S_{3,j}(\Gamma[2])$ in our code. Evidence towards this conjecture are given 
for example by the results of Petersen in \cite{Petersen2017} or in Section 6.6 of \cite{BFvdG2014}. We start by recalling the
dimension formula for the space $M_{k,j}(\Gamma[2])$ which can also be used to check its conjectural isotypical decomposition for $k=3$.

\begin{thm}[{\cite[Theorems 2 and 3]{Tsushima} and \cite[Theorem 12.1]{CvdGG}}] 
For $k\geqslant 3$ odd and $j\geqslant 2$ even we have
\begin{align*}
\dim M_{k,j}(\Gamma[2])=\dim S_{k,j}(\Gamma[2])=
\frac{1}{24}
\big(&
2(j+ 1)k^3 + 3(j^2-2j-8)k^2+(j^3-9j^2-42j+118)k\\
& -2j^3-9j^2+152j-216
\big).
\end{align*}
For $k\geqslant 4$ even and $j\geqslant 2$ even  we have
\begin{align*}
\dim M_{k,j}(\Gamma[2])=
\frac{1}{24}
\big(&
2(j+ 1)k^3 + 3(j^2-2j+2)k^2+(j^3-9j^2-12j+28)k -2j^3-9j^2+182j-336
\big).
\end{align*}
\end{thm}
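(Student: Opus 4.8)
The plan is to obtain the two formulas by specialising Tsushima's general dimension formula for $S_{k,j}(\Gamma[N])$, \cite[Theorems 2, 3]{Tsushima}, to $N=2$, and then enlarging the range of admissible weights to $k=3$ (odd) and $k=4$ (even) as in \cite[Theorem 12.1]{CvdGG}. Recall that Tsushima's formula is produced by the holomorphic Lefschetz fixed point theorem --- equivalently, by Hirzebruch--Riemann--Roch together with Mumford's extension of proportionality to toroidal compactifications --- applied to the automorphic vector bundle $\mathbb{E}_{k,j}=\Sym^j(\mathbb{E})\otimes\det(\mathbb{E})^{\,k}$ on a smooth projective toroidal compactification $X$ of the coarse space of $\mathcal{A}_2[2]$, where $\mathbb{E}$ is the canonical extension of the rank-two Hodge bundle and $D=X\setminus\mathcal{A}_2[2]$ is the normal crossings boundary. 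Writing $\lambda=\det\mathbb{E}$ and using the identity $K_X=3\lambda-D$, the Koecher principle gives $S_{k,j}(\Gamma[2])=H^0(X,\mathbb{E}_{k,j}(-D))$ and $M_{k,j}(\Gamma[2])=H^0(X,\mathbb{E}_{k,j})$; in any range of weights where the higher cohomology of the relevant sheaf vanishes these equal the Euler characteristics $\chi(X,\mathbb{E}_{k,j}(-D))$ and $\chi(X,\mathbb{E}_{k,j})$.

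Next I would compute these Euler characteristics for $N=2$ as a sum of three standard contributions. The \emph{interior term} is, by Hirzebruch--Mumford proportionality, the index $[\Gamma:\Gamma[2]]=720$ times the universal polynomial in $(k,j)$ attached to the compact dual of $\mathfrak{H}_2$; it produces the cubic leading part of the displayed expressions. The \emph{elliptic terms} are the local holomorphic Lefschetz contributions of the torsion elements of $\Gamma[2]/\{\pm 1_4\}$, supported at the finitely many points of $\mathcal{A}_2[2]$ carrying extra automorphisms; each is an explicit rational number determined by the eigenvalues of the automorphism on $H^1$, and the data needed is precisely the Bolza-type classification of automorphism groups of genus-two curves and of unordered pairs of elliptic curves tabulated in Section~\ref{sec-euler}. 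The \emph{boundary terms} come from the difference between the sub-canonical and canonical extensions and from the toroidal geometry over the cusps; for $\Gamma[2]$ there are $15$ one-dimensional cusps, each a modular curve of level $2$, and $15$ zero-dimensional cusps, and their contributions are expressible through $\dim S_m(\Gamma(2))$ and elementary constants. Summing the three pieces and simplifying yields the two polynomials; for $k$ odd one additionally uses $M_{k,j}(\Gamma[2])=S_{k,j}(\Gamma[2])$, already established in Section~\ref{sec-smf}.

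Finally I would justify that the formula is valid already at $k=3$ and $k=4$, that is, that $H^i(X,\mathbb{E}_{k,j}(-D))=0$ for all $i>0$ when $k=3$, and $H^i(X,\mathbb{E}_{k,j})=0$ for all $i>0$ when $k=4$. Since $\mathbb{E}_{k,j}(-D)\otimes K_X^{-1}=\Sym^j(\mathbb{E})\otimes\lambda^{\,k-3}$ and $\lambda$ is nef and big on $X$, Kodaira/Nakano-type vanishing is comfortable for $k\geqslant 5$, sits on the edge of applicability for $k=4$, and fails as a positivity statement for $k=3$, where the twist reduces to the merely nef bundle $\Sym^j(\mathbb{E})$. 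The route I would follow, which is the one of \cite[Theorem 12.1]{CvdGG}, is to express these cohomology groups through the cohomology of the symplectic local systems $\V_{j+k-3,k-3}$ (so $\V_{j,0}$ when $k=3$) and to exploit that in positive cohomological degree only non-tempered constituents can survive, which contribute nothing to the holomorphic count in degree $0$; equivalently, one invokes Weissauer's vanishing theorem for the relevant local systems. This yields $\dim S_{k,j}(\Gamma[2])=\chi$ throughout the enlarged range.

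The step I expect to be the \emph{main obstacle} is this last one, namely the vanishing of higher cohomology just below the safe weight range. By contrast, the enumeration of the interior, elliptic and boundary contributions is lengthy but entirely mechanical once the data of Section~\ref{sec-euler} is in hand, and the passage from the three partial sums to the two displayed cubics is a routine symbolic manipulation.
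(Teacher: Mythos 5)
You should first note that the paper does not prove this statement at all: it is imported verbatim from \cite{Tsushima} and \cite{CvdGG}, as the bracketed attribution in the theorem header indicates, so there is no internal proof to compare against. Judged on its own terms, your outline correctly identifies the strategy of the cited sources: Hirzebruch--Riemann--Roch/holomorphic Lefschetz on a toroidal compactification with $K_X=3\lambda-D$, the Koecher principle to identify $M_{k,j}$ and $S_{k,j}$ with $H^0$ of the canonical extension and its $-D$ twist, Hirzebruch--Mumford proportionality for the main term, and a separate vanishing argument (via the cohomology of the local systems $\V_{j+k-3,k-3}$ and the non-temperedness of what can survive in higher degree) to push the validity down to $k=3$ and $k=4$ as in \cite[Theorem 12.1]{CvdGG}. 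That is the right skeleton, and you correctly locate the delicate step at the boundary of the weight range.

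Two caveats. First, your description of the ``elliptic terms'' is inaccurate for $\Gamma[2]$: the torsion elements of $\Gamma[2]/\{\pm 1_4\}$ (e.g.\ $\mathrm{diag}(1,-1,1,-1)$) have positive-dimensional fixed loci in $\mathfrak{H}_2$ --- the product locus $\AA_{1,1}[2]$ is a surface and the bielliptic-type loci are curves --- so these contributions are integrals of characteristic classes over fixed-point components of dimensions $0$, $1$ and $2$, not local terms at finitely many points; correspondingly, the data you would need is not quite the Bolza/automorphism table of Section~\ref{sec-euler} (which is organized for Euler characteristics of local systems) but the conjugacy classes of torsion elements of $\Gamma[2]$ together with their fixed loci. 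Second, the proposal stops at the level of a plan: the three partial sums are never computed, so nothing in the write-up actually certifies the two displayed cubics. Since the theorem's content \emph{is} those explicit polynomials, a proof that defers the entire computation as ``mechanical'' has not yet established the statement; as written this is a citation of the literature dressed as a proof sketch, which is in fact exactly how the paper itself treats the result.
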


\begin{remark}
For $k=3$, this formula is rather pretty
\[
\dim S_{3,j}(\Gamma[2])=(j-2)(j-3)(j-4)/24.
\]
\end{remark}

\subsection{Isotypical decomposition of 
\texorpdfstring{$M_{k,j}(\Gamma[2])$}{M {k,j}(Gamma[2])}} 
For $k\geqslant 0$, the results of R\"osner (see \cite[Section 5]{Roesner}) and those of \cite[Section 13]{CvdGG} tell us that 
\[
M_{k,j}(\Gamma[2])=E_{k,j}(\Gamma[2]) \oplus S_{k,j}(\Gamma[2])=
KE_{k,j}(\Gamma[2]) \oplus S^{\textbf{(Y)}}_{k,j}(\Gamma[2]) \oplus S^{\textbf{(G)}}_{k,j}(\Gamma[2])
\]
Theorem 5.13 and Remark 5.14 in \cite{Roesner} which prove
Conjecture 6.4 of \cite{BFvdG} give us the isotypical decomposition of the space $S^{\textbf{(Y)}}_{k,j}(\Gamma[2])$ 
for $k\geqslant 3$ and $j>0$
\begin{align*}
\iso_{\fS_6} S^{\textbf{(Y)}}_{k,j}(\Gamma[2])  =
\mu_1\, s[2^3]\, + \,
\mu_2\,s[2,1^4]  + \,
\mu_3\,s[1^6] \quad \text{with} \quad
\mu_1 & = d^+_{2,j+2k-2}d^+_{2,j+2}+d^-_{2,j+2k-2}d^-_{2,j+2}\\
\mu_2 & = d_{4,j+2k-2}d_{4,j+2}\\
\mu_3 & = d^+_{2,j+2k-2}d^-_{2,j+2}+d^-_{2,j+2k-2}d^+_{2,j+2}
\end{align*}
where the integers $d_{N,k}$ and $d^{\pm}_{N,k}$ are defined as in (\ref{def_d_k}).

Proposition 13.1 in \cite{CvdGG} gives us $KE_{k,j}(\Gamma[2])={0}$ for $k$ odd.  For $k\geqslant 2$, this proposition (note that there is a typo  in \cite{CvdGG}) tells us that 
\begin{align*}
\iso_{\fS_6} KE_{2k,j}(\Gamma[2])=&
\text{Ind}_{H}^{\mathfrak{S}_6}\left( \text{{Sym}}^{j/2+k}\bigl(s[2,1]\bigr)-s[3]-s[2,1]\right)=
\text{Ind}_{H}^{\mathfrak{S}_6}\left(S_{2k+j}(\Gamma(2))\right)\\
=&d_{1,2k+j}(s[6] + s[5,1])+
(2d_{1,2k+j}+d_{2,2k+j})s[4,2] +
(d_{1,2k+j}+d_{2,2k+j})(s[3,2,1]+s[2^3])\\ 
&+  
d_{4,2k+j}(s[3,1^3]+s[2,1^4])
\end{align*}
where the last identity followed from Section \ref{EevenQ}. So to get the isotypical decomposition of 
the space $M_{k,j}(\Gamma[2])$ it remains to determine it for the space $S^{\textbf{(G)}}_{k,j}(\Gamma[2])$. This is done in the next section.

\begin{subsection}{An isotypical dimension formula for 
\texorpdfstring{$S^{\textbf{(G)}}_{k,j}(\Gamma[2])$}{S {k,j}(Gamma[2])}}
First we introduce some notation from \cite{BFvdG}. Let 
\[
\begin{array}{ll}
A=s[6]\oplus s[5,1]+s[4,2],     &\qquad A'=s[6]\oplus s[4,2]\oplus s[2^3], \\
B=s[4,2]\oplus s[3,2,1]+s[2^3], &\qquad B'=s[5,1]\oplus s[4,2]\oplus s[3,2,1], \\
C=s[3,1^3]\oplus s[2,1^4],      &\qquad C'=s[4,1^2]\oplus s[3^2].
\end{array}
\]

For any $l,m$, with $l > m > 0$, put $n=l+m+4$, $n'=l-m+2$ and define
\begin{align}\label{EisCohom}
\nonumber
E_{c,\mathrm{Eis}}(A_2[2],V_{l,m})=&
(d_{1,n'}-d_{1,n})\,(A'+B')+\,(d_{2,n'}-d_{2,n})\,B'+(d_{4,n'}-d_{4,n})\, C' 
+\frac{1}{2}\bigl(1+(-1)^m\bigr)\, (A+B)\\
&+2\bigl((d_{1,m+2}-d_{1,l+3})\,(A+B) +(d_{2,m+2}-d_{2,l+3})\,B+ 
(d_{4,m+2}-d_{4,l+3})\,C\bigr)
\end{align}
and 
\begin{align*}
 E_{c,\mathrm{endo}}(A_2[2],V_{l,m})= & 
 -2\Bigl(
 d_{4,n'} \, \bigl(d_{4,n} \, s[3,1^3]+d_{1,n} \, s[3^2]+(d_
{1,n}+d_{2,n}) \, s[4,1^2] \bigr)\\
& + d_{2,n'} \,  \bigl((d_{1,n}+d_{2,n}) \, s[3,2,1]+d_{4,n} \, s[4,1^2] + d_{1,n} \, s[4,2]+d_{1,n} \, s[5,1] \bigr)  \\
& + d^+_{2,n'}\, \bigl(d^+_{2,n} \, s[4,2]+d^-_{2,n}\, s[5,1]\bigr)
+d^-_{2,n'}\, \bigl(d^-_{2,n} \, s[4,2]+d^+_{2,n} \, s[5,1]\bigr) \\
& + d_{1,n'} \, \bigl(d_{1,n}\,(A'+B') +d_{2,n}\,B'+ d_{4,n}\,C'\bigr) 
\Bigr)
\end{align*}
as elements of the representation ring $\Z[\fS_6]$.

\begin{thm} \label{thm-main} 
For any $k\geq 4$ and $j >0$, put $l=j+k-3$ and $m=k-3$. Then 
\[
\iso_{\fS_6} S^{\textbf{(G)}}_{k,j}(\Gamma[2])=-\frac{1}{4}\Bigl(E_c(A_2[2],V_{l,m})-E_{c,\mathrm{Eis}}(A_2[2],V_{l,m})-E_{c,\mathrm{endo}}(A_2[2],V_{l,m})+2 \, \iso_{\fS_6}  S^{\textbf{(Y)}}_{k,j}(\Gamma[2])\Bigr).
\]
\end{thm}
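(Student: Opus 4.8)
The plan is to identify the $\fS_6$-equivariant compactly supported Euler characteristic $E_c(A_2[2],V_{l,m})$ with a sum of contributions coming from the various pieces of the cohomology of the local system, following the now-proven conjectures of \cite{BFvdG}. Concretely, for $\lambda=(l,m)$ with $l>m>0$ the inner cohomology of $V_{\lambda}$ on $A_2[2]$ decomposes, by the description in \cite{BFvdG} proved in \cite{Roesner}, into an Eisenstein part, an ``endoscopic'' (Saito--Kurokawa and Yoshida) part, and a ``general-type'' part; the first step is to write
\[
E_c(A_2[2],V_{l,m})=E_{c,\mathrm{Eis}}(A_2[2],V_{l,m})+E_{c,\mathrm{endo}}(A_2[2],V_{l,m})+E_{c,\mathrm{gen}}(A_2[2],V_{l,m})
\]
as an identity in $\Z[\fS_6]$, where $E_{c,\mathrm{Eis}}$ and $E_{c,\mathrm{endo}}$ are exactly the explicit elements defined just before the theorem. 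The Eisenstein contribution is obtained from the boundary of a smooth compactification, so it is governed by elliptic cusp forms of levels $1,2,4$ (hence the $d_{N,k}$ and $d^{\pm}_{N,k}$); the shift $l=j+k-3$, $m=k-3$ is the standard one relating the Hodge weight of $V_{l,m}$ to the Siegel weight $(k,j)$, so that $n=l+m+4=j+2k-2$ and $n'=l-m+2=j+2$ are precisely the weights appearing in $S^{\textbf{(Y)}}_{k,j}$ and in the tables of Section~\ref{sec-smf}.

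Next I would pin down $E_{c,\mathrm{gen}}$. Each Hecke-cuspidal automorphic representation of general type contributing to $S^{\textbf{(G)}}_{k,j}(\Gamma[2])$ carries a $4$-dimensional $\ell$-adic Galois representation (equivalently, a weight-$(l,m)$ piece of $H^3$ sitting in the interior cohomology with a four-dimensional coefficient space), and it contributes only in degree $i=3$; since $H^3_c$ enters the Euler characteristic with sign $(-1)^3=-1$, the general-type contribution to $E_c(A_2[2],V_{l,m})$ equals $-4\,\mathrm{dim}_{\fS_6}S^{\textbf{(G)}}_{k,j}(\Gamma[2])$ as an element of $\Z[\fS_6]$ (the factor $4$ from the rank of the motive, which is $\fS_6$-equivariantly just a multiplicity). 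The analogous bookkeeping for the Yoshida part shows that $S^{\textbf{(Y)}}_{k,j}(\Gamma[2])$ contributes $-2\,\mathrm{dim}_{\fS_6}S^{\textbf{(Y)}}_{k,j}(\Gamma[2])$ to $E_c$ (rank $2$ rather than $4$, because a Yoshida lift is built from two elliptic modular forms and its ``motive'' splits off a two-dimensional summand already accounted for in $E_{c,\mathrm{endo}}$), while the Saito--Kurokawa part — which only occurs for $j=0$ — does not appear here. Collecting the equivariant Euler-characteristic identity and solving for the unknown term then gives
\[
\mathrm{dim}_{\fS_6}S^{\textbf{(G)}}_{k,j}(\Gamma[2])=-\tfrac14\Bigl(E_c(A_2[2],V_{l,m})-E_{c,\mathrm{Eis}}(A_2[2],V_{l,m})-E_{c,\mathrm{endo}}(A_2[2],V_{l,m})+2\,\mathrm{dim}_{\fS_6}S^{\textbf{(Y)}}_{k,j}(\Gamma[2])\Bigr),
\]
which is the assertion.

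I expect the main obstacle to be purely bookkeeping: verifying that the explicit formulas for $E_{c,\mathrm{Eis}}$ and $E_{c,\mathrm{endo}}$ written before the theorem really are the Eisenstein and endoscopic contributions with the correct $\fS_6$-representations and the correct arguments $n=j+2k-2$, $n'=j+2$, $m+2=k-1$, $l+3=j+k$, and in particular that the induction from the stabilizer $H$ and the various tensor products of $d^{\pm}_{2,\bullet}$'s match the Yoshida and Saito--Kurokawa packet structure for $\mathrm{GSp}(4)$ at level $2$ exactly as in \cite[Conjectures 6.4, 6.6]{BFvdG} and \cite[Section 5]{Roesner}. Once that matching is in place — and it is, since those conjectures are theorems by R\"osner — the only remaining point is the sign and the multiplicity $4$ in front of the general-type term, which follows from the fact that interior cohomology of general type lives purely in middle degree $3$ with a four-dimensional coefficient space; the factor $4$ is the same one that appears in the scalar-valued discussion of Section~\ref{sec-sv} and in \cite{BFvdG}. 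Vanishing of the relevant $E_{k,j}^{\textbf{(F)}}$ and $S^{\textbf{(P)}}_{k,j}$ contributions for $j>0$ (recorded in Section~\ref{sec-smf}) ensures no further terms intervene.
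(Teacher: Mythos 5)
Your proposal is correct and follows essentially the same route as the paper: both rest on the decomposition $e_c=e_{c,\mathrm{Eis}}+e_{c,\mathrm{endo}}-S[l-m,m+3,\Gamma[2]]$ from \cite{BFvdG} (proved in \cite{Roesner}), with the cuspidal piece contributing $4$-dimensional blocks for general type and $2$-dimensional blocks for Yoshida type, after which one solves for $\dim_{\fS_6}S^{\textbf{(G)}}_{k,j}(\Gamma[2])$. The paper's proof is just a more compressed citation of \cite[Theorem 4.4, Conjectures 6.4 and 7.1]{BFvdG} and \cite[Theorem 5.13, Corollary 5.20]{Roesner} for exactly the facts you identify as the remaining bookkeeping.
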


\begin{proof}
In \cite{BFvdG}, the compactly supported $\ell$-adic Euler characteristic of local systems $\V_{l,m}$ 
on $\mathcal A_2[2]$ taking values in the Grothendieck group of (absolute) Galois representations is 
decomposed into the following pieces, 
\[
e_c(\mathcal A_2[2],\V_{l,m})=e_{c,\mathrm{Eis}}(\mathcal A_2[2],\V_{l,m})+e_{c,\mathrm{endo}}(\mathcal A_2[2],\V_{l,m})- S[l-m,m+3,\Gamma[2]]. 
\] 
The formula for $E_{c,\mathrm{Eis}}(A_2[2],V_{l,m})$ (respectively $E_{c,\mathrm{endo}}(A_2[2],V_{l,m})$) 
is found by taking dimensions in the formula for $e_{c,\mathrm{Eis}}(\mathcal A_2[2],\V_{l,m})$ 
(respectively $e_{c,\mathrm{endo}}(\mathcal A_2[2],\V_{l,m})$) in \cite[Theorem 4.4]{BFvdG} 
(respectively \cite[Conjecture 7.1]{BFvdG}). The representation $S[l-m,m+3,\Gamma[2]]$ should 
conjecturally consist of $2$-dimensional pieces for each Hecke eigenvector in $S^{\textbf{(Y)}}_{k,j}(\Gamma[2])$, 
with isotypic decomposition given in \cite[Conjecture 6.4]{BFvdG}, and $4$-dimensional pieces 
for each Hecke eigenvector in $S^{\textbf{(G)}}_{k,j}(\Gamma[2])$. 

The conjectural description in \cite{BFvdG} described above, has been proven in \cite{Roesner}. Conjectures 7.1 and 6.4 of \cite{BFvdG} are proved by Theorem 5.13 of \cite{Roesner}, see Remark 5.14 of \cite{Roesner}. The result then follows from \cite[Corollary 5.20]{Roesner}.
\end{proof}

\begin{remark} \label{rem-k3}
In \cite{BFvdG}, directly after Theorem~4.4, it is conjectured that $E_{c,\mathrm{Eis}}(A_2[2],V_{l,0})$ 
for any $l>0$ is given by (\ref{EisCohom}), with the difference that one needs to put $d_{1,2}=-1$. 
If we assume this conjecture to be true, and we define $E_{c,\mathrm{endo}}(A_2[2],V_{l,0})$ 
for any $l>0$ using the formula above, then Theorem~\ref{thm-main} also holds for $k=3$ and $j>0$ 
using the same proof (and the same results of \cite{Roesner}). 
\end{remark}

\end{subsection}

\section*{Acknowledgement}
The second author was supported by the Simons Foundation Award 546235 at the Institute for Computational and Experimental Research in Mathematics at Brown University. We thank Eran Assaf and Gerard van der Geer for useful discussions.

\end{document}